\documentclass[reqno]{article} 

\usepackage{latexsym}
\usepackage{verbatim}
\usepackage{epsfig}
\usepackage{rotating}
\usepackage{amssymb}
\usepackage[T1]{fontenc}
\usepackage{afterpage}
\usepackage{color}
\usepackage{pstricks}

\usepackage{amssymb,amsfonts,amsmath,stmaryrd,amsthm}

\usepackage{lineno,hyperref}
\modulolinenumbers[5]


\newtheorem{thm}{Theorem}[section]
\newtheorem{lem}[thm]{Lemma}
\newtheorem{prop}[thm]{Proposition}

\newtheorem{cor}[thm]{Corollary}
\newtheorem{conj}[thm]{Conjecture}

\newtheorem{defn}[thm]{Definition}
\theoremstyle{remark}
\newtheorem{rem}[thm]{Remark}
\newtheorem{ex}[thm]{Example}

\newcommand{\vertical}[0]{\text{Vert}}
\newcommand{\horiz}[0]{\text{Horiz}}
\newcommand{\inn}[0]{\text{In}}
\newcommand{\out}[0]{\text{Out}}
\newcommand{\bin}[0]{\text{Bin}}

\newcommand{\shuffles}[0]{\Sigma}

\newcommand{\hpath}[0]{\mathcal{H}}
\newcommand{\vpath}[0]{\mathcal{V}}
\newcommand{\NN}{\mathbb{N}_0}









\bibliographystyle{elsarticle-num}

\begin{document}

\title{Signed Enumeration of Upper-Right Corners in Path Shuffles}

\author{William Kuszmaul}

\maketitle

\begin{abstract}
We resolve a conjecture of Albert and Bousquet-M{\'e}lou enumerating quarter-planar walks with fixed horizontal and vertical
projections according to their upper-right-corner count modulo 2. In
doing this, we introduce a signed upper-right-corner count
statistic. We find its distribution over planar walks with any choice
of fixed horizontal and vertical projections. Additionally, we prove
that the polynomial counting loops with a fixed horizontal and
vertical projection according to the absolute value of their signed
upper-right-corner count is $(x+1)$-positive. Finally, we conjecture
an equivalence between $(x+1)$-positivity of the generating function
for upper-right-corner count and signed upper-right-corner count,
leading to a reformulation of a conjecture of Albert and
Bousquet-M{\'e}lou on which their asymptotic analysis of permutations
sortable by two stacks in parallel relies.
\end{abstract}


\section{Introduction}

When studying walks on $\mathbb{Z}^2$ with unit steps, it is natural
to restrict oneself to walks which project vertically on a fixed
vertical path $\vpath$ and horizontally on a fixed horizontal path
$\hpath$, where $\vpath$ (resp. $\hpath$) comprises North and South
steps (resp. East and West steps) \cite{albert15}. Such paths
correspond with \emph{shuffles}, or interleavings, of $\vpath$ and
$\hpath$. If $|\vpath| = v$ and $|\hpath| = h$, there are
${v+h} \choose {h}$ such shuffles. It will be convenient to denote East, West, North,
and South steps by $\rightarrow$, $\leftarrow$, $\uparrow$, and
$\downarrow$.

Given a path, several natural statistics arise. In this paper, we
focus on variants of \emph{peak-count}, which counts the number of NW
(i.e., $\Lsh$) and ES (i.e., \rotatebox[origin=c]{270}{$\Rsh$})
corners. If the path lies within the first quadrant, then the
peak-count is visually the number of peaks pointing away from the
origin. Equivalently, peak-count counts occurrences of $\rightarrow
\downarrow$ and $\uparrow \leftarrow$ in a shuffle of $\vpath$ and
$\hpath$. Albert and Bousquet-M{\'e}lou introduced peak-count and
showed it to have applications in the study of permutations sortable
by two stacks in parallel \cite{albert15}. They also proved that
peak-count has the same distribution over shuffles as do several other
statistics (Proposition 14 of \cite{albert15}, which was originally
observed by Julien Courtiel and Olivier Bernardi independently).

Albert and Bousquet-M{\'e}lou studied several generating functions
involving peak-count of planar walks in depth. Although very little is
known in the case where the path's horizontal and vertical projections
are fixed, Albert and Bousquet-M{\'e}lou posed the following
conjecture, which they attribute to Julien Courtiel \cite{albert15}.

\begin{conj}[Albert and Bousquet-M{\'e}lou, Conjecture (P1) on pp. 32 \cite{albert15}]\label{conjproven}
Let $\hpath$ (resp. $\vpath$) be a path beginning and ending at the
origin of half-length $i$ (resp. $j$) on the alphabet $\{\rightarrow,
\leftarrow\}$ (resp. $\{\uparrow, \downarrow\}$). Then the the
polynomial that counts walks of the shuffle class of $\hpath \vpath$ according to
the number of $\Lsh$ and \rotatebox[origin=c]{270}{$\Rsh$} corners
takes the value $i+j \choose i$ when evaluated at $-1$. Equivalently,
the shuffle class of $\hpath \vpath$ contains $i + j \choose i$ more shuffles
with even peak-count than with odd peak-count.
\end{conj}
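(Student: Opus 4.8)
The plan is to trade the parity statistic for a signed statistic that carries the same information modulo $2$ but is far better behaved under cancellation. For a shuffle $w$ of $\hpath$ and $\vpath$, write $A(w)$ for the number of $\rightarrow\downarrow$ corners and $B(w)$ for the number of $\uparrow\leftarrow$ corners, so that the peak-count is $A(w)+B(w)$, and define the signed corner count by $\mathrm{sc}(w)=A(w)-B(w)$. Since $A+B\equiv A-B\pmod 2$, we have $(-1)^{A(w)+B(w)}=(-1)^{\mathrm{sc}(w)}$, so the value of the peak-count polynomial at $-1$ is exactly $\sum_{w}(-1)^{\mathrm{sc}(w)}$, the sum ranging over the shuffle class of $\hpath\vpath$. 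Conjecture~\ref{conjproven} is therefore equivalent to the identity $\sum_{w}(-1)^{\mathrm{sc}(w)}=\binom{i+j}{i}$, and I would in fact aim at the sharper statement that the full distribution $\sum_{w}y^{\mathrm{sc}(w)}$ is a palindromic Laurent polynomial in $y$ taking the value $\binom{i+j}{i}$ at $y=-1$. The smallest cases bear this out: for $i=j=1$ one gets $y^{-1}+4+y$, and for $i=2,\ j=1$ one gets $3y^{-1}+9+3y$, each evaluating to $\binom{i+j}{i}$ at $-1$.

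The main tool I would build is a sign-reversing involution $\phi$ on the shuffle class with respect to the weight $(-1)^{\mathrm{sc}}$. Because $\mathrm{sc}$ and the peak-count agree modulo $2$, it is enough that $\phi$ toggle the presence of a single upper-right corner, i.e.\ change the total corner count by exactly one. The natural local move is to transpose an adjacent horizontal step and vertical step across a run boundary: this converts an $H$–$V$ adjacency into a $V$–$H$ adjacency (or conversely), creating or destroying one corner while preserving the relative order within the horizontal steps and within the vertical steps, hence sending shuffles to shuffles. To upgrade this move to an involution I would scan the merged word from the left and act at the first \emph{eligible} boundary, using a matching rule that pairs each cancellable corner with a uniquely determined partner; the shuffles admitting no eligible boundary are declared fixed. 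I would then argue that the fixed points are in bijection with the monotone lattice paths from $(0,0)$ to $(i,j)$, of which there are $\binom{i+j}{i}$, each contributing $+1$ to the signed sum.

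I expect the crux to be twofold. First, showing that the local transposition changes the corner count by \emph{exactly} one requires controlling the two boundaries neighboring the transposed pair; this forces the scanning and matching rule to be chosen so that those neighbors are inert — for instance by always acting adjacent to a maximal monochromatic run of length at least two, or by a recursive peeling that removes a matched corner together with a short block. Second, identifying the fixed-point set and its bijection with lattice paths is exactly where the hypothesis that $\hpath$ and $\vpath$ are \emph{closed} (equal numbers of $\rightarrow,\leftarrow$ and of $\uparrow,\downarrow$) must enter: it is this balance that pins the count to $\binom{i+j}{i}$ rather than to a quantity depending on the specific words. Verifying that the naive guess (fixed points $=$ shuffles whose steps occur in contiguous pairs) is \emph{wrong} — such a "paired" shuffle can already have odd peak-count — is what convinces me that the correct fixed-point description, and not merely the existence of the involution, is the genuine obstacle.

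As a fallback, should a clean global involution prove elusive, I would instead compute the generating function $\sum_{w}y^{\mathrm{sc}(w)}$ directly. Encoding a shuffle by its interleaving pattern, one sets up a transfer-matrix (equivalently, kernel-method) recursion in the partial counts $(a,b)$ of horizontal and vertical steps used so far, tracking the direction of the last placed step so that each new corner can be weighted. Palindromicity would follow from the diagonal reflection that interchanges $\rightarrow\downarrow$ with $\uparrow\leftarrow$ corners and hence negates $\mathrm{sc}$; word-independence I would isolate as a separate invariance lemma, namely that the signed sum is unchanged under transposing two adjacent horizontal steps of $\hpath$ (or two adjacent vertical steps of $\vpath$), which already reduces any closed $\hpath,\vpath$ to the canonical representatives $\hpath=\rightarrow^{\,i}\leftarrow^{\,i}$ and $\vpath=\uparrow^{\,j}\downarrow^{\,j}$. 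On these representatives the $(a,b)$-recursion should telescope, yielding the palindromic polynomial whose value at $y=-1$ is $\binom{i+j}{i}$ and thereby establishing Conjecture~\ref{conjproven}.
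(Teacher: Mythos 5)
You have correctly made the opening move that the paper also makes: since peak-count and the signed count agree modulo $2$, the evaluation at $-1$ equals $\sum_w(-1)^{\mathrm{sc}(w)}$, and your small-case distributions ($y^{-1}+4+y$ and $3y^{-1}+9+3y$) are consistent with the correct general formula. But beyond this reduction nothing is actually proved: both branches of your plan stop exactly at their hard steps. For the involution branch, the local move of transposing an adjacent horizontal/vertical pair does \emph{not} always change the corner count by one --- it touches three adjacencies $(p,x),(x,y),(y,n)$, and, for instance, swapping $x=\rightarrow,\ y=\downarrow$ when $p=\rightarrow$ and $n\neq\downarrow$ leaves the count (and the signed count) unchanged --- so everything hinges on the ``eligibility/matching rule,'' which you never specify; likewise the fixed-point set is only guessed at, and you yourself flag its identification as the genuine obstacle. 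For the fallback branch, the invariance-under-transposition lemma and the telescoping of the $(a,b)$-recursion are asserted, not carried out. A proposal whose self-declared crux is left open is not a proof.

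For comparison, the paper resolves the conjecture by proving your ``sharper statement'' in full generality and then finishing with a known identity. Theorem \ref{thmmain} shows that for \emph{arbitrary} words $\hpath,\vpath$ (no closedness hypothesis) the number of shuffles with signed peak-count $k$ is ${r+u \choose u-k}{l+d \choose d+k}$; the proof goes through an auxiliary statistic (shifted In-Vert), a ``flip'' involution reversing maximal runs of outward steps (Proposition \ref{propbiject}), a bijection between shuffles and blue-red colorings (Proposition \ref{propNcount}), and a short induction to remove an end-step restriction. Closedness enters only at the very last moment: with $r=l=i$ and $u=d=j$, Corollary \ref{cormod2} gives the signed sum $\sum_k(-1)^k{i+j \choose j-k}{i+j \choose j+k}$, which equals ${i+j \choose j}$ by Equation (30) of \cite{gessel92}. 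In particular, your expectation that the balance hypothesis must enter through the fixed-point analysis is misdirected: in the paper's argument the distribution itself is word-independent and completely general, and closedness is used only when evaluating the alternating sum. If you want to salvage your plan, the most promising repair is to aim directly at the product formula for the distribution (your own data points to it) rather than at a sign-reversing involution whose fixed points you cannot describe; the cancellation at $y=-1$ is then handled by a classical binomial identity rather than by combinatorial pairing.
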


Albert and Bousquet-M{\'e}lou note that, among other things, proving the
conjecture would eliminate the need for the somewhat lengthy proof of
their Proposition 15 \cite{albert15}.

We prove Conjecture \ref{conjproven} and extend it to the case where
$\hpath$ and $\vpath$ are arbitrary words on the alphabets
$\{\rightarrow, \leftarrow\}$ and $\{\uparrow, \downarrow\}$
respectively. This corresponds with considering arbitrary walks on the
plane, rather than only planar loops.

In order to study peak-count modulo 2, it is sufficient to study what
we call \emph{signed peak-count}, which is the number of $\Lsh$
corners minus the number of $\rotatebox[origin=c]{270}{$\Rsh$}$
corners. While signed peak-count and peak-count are guaranteed to
share the same parity, it turns out that signed peak-count exhibits
nice behavior allowing for it to be completely enumerated.

\begin{thm}\label{introthm}
Let $\vpath$ be a word comprising $u$ $\uparrow$'s and $d$
$\downarrow$'s. Let $\hpath$ be a word comprising $r$ $\rightarrow$'s
and $l$ $\leftarrow$'s. Then the number of shuffles of $\vpath$ and
$\hpath$ with signed peak-count $k$ is
$${r+u \choose u-k}{l+d \choose d+k}.$$
\end{thm}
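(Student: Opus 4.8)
My plan is to prove Theorem~\ref{introthm} bijectively, by matching the shuffles of $\vpath$ and $\hpath$ having signed peak-count $k$ with pairs of binary strings whose numbers of ones are prescribed so as to produce the two binomial factors. Call a step \emph{positive} if it is $\uparrow$ or $\rightarrow$ and \emph{negative} if it is $\downarrow$ or $\leftarrow$; there are $r+u$ positive and $l+d$ negative steps, matching the upper arguments of the two binomials. The combinatorial heart is the identification of two distinguished sets of steps. Let $S_1$ consist of every $\uparrow$ that is \emph{not} immediately followed by a $\leftarrow$ together with every $\rightarrow$ that \emph{is} immediately followed by a $\downarrow$, and let $S_2$ consist of every $\downarrow$ that is \emph{not} immediately preceded by a $\rightarrow$ together with every $\leftarrow$ that \emph{is} immediately preceded by an $\uparrow$. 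Since $\#(\uparrow\leftarrow)$ counts the $\uparrow$'s immediately followed by a $\leftarrow$ and $\#(\rightarrow\downarrow)$ counts the $\downarrow$'s immediately preceded by a $\rightarrow$, a one-line count gives $|S_1| = (u - \#(\uparrow\leftarrow)) + \#(\rightarrow\downarrow) = u - k$ and $|S_2| = (d - \#(\rightarrow\downarrow)) + \#(\uparrow\leftarrow) = d + k$, where $k$ is the signed peak-count.

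Note that $S_1$ consists entirely of positive steps and $S_2$ entirely of negative steps. I therefore encode a shuffle as the pair $(a,b)$, where $a \in \{0,1\}^{r+u}$ records, reading the positive steps in the order they occur, which of them lie in $S_1$, and $b \in \{0,1\}^{l+d}$ records which of the negative steps lie in $S_2$. By the size identities, $a$ has exactly $u-k$ ones and $b$ has exactly $d+k$ ones, so a shuffle of signed peak-count $k$ is sent to one of the $\binom{r+u}{u-k}\binom{l+d}{d+k}$ such pairs. Summing over $k$, the target count satisfies $\sum_k \binom{r+u}{u-k}\binom{l+d}{d+k} = \binom{r+u+l+d}{u+d}$ by Vandermonde's identity, which is exactly the total number of shuffles of $\vpath$ and $\hpath$; hence it suffices to prove that this encoding is a bijection, and injectivity alone will do once the cardinalities are seen to agree.

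The main obstacle is precisely the injectivity (equivalently surjectivity) of this encoding: from the raw membership bits $(a,b)$ one must reconstruct the full interleaving of $\vpath$ and $\hpath$, even though the definitions of $S_1$ and $S_2$ refer to adjacencies in the very shuffle being recovered. I expect to resolve this by exhibiting an explicit decoding procedure that scans the positive and negative steps and uses the fixed words $\vpath$ and $\hpath$, together with the bits, to determine greedily the relative order of each positive and each negative step: membership or non-membership forces local constraints (for instance a $\rightarrow \notin S_1$ must not be immediately followed by a $\downarrow$, and a $\downarrow \in S_2$ must not be immediately preceded by a $\rightarrow$) that pin down the merge one step at a time. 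Verifying that this decoding is well defined, always produces a legitimate shuffle, and inverts the encoding is where the real work lies; the size bookkeeping above is the easy part.

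As a consistency check guiding the construction, reversing a shuffle and negating every step turns each $\uparrow\leftarrow$ into a $\rightarrow\downarrow$ and conversely, so it sends signed peak-count $k$ to $-k$ while interchanging $(u,r)$ with $(d,l)$; the claimed formula is invariant under the matching substitution, which both tests the statement and suggests symmetrizing the decoding. Should a direct inverse prove unwieldy, a fallback is to first establish that the distribution of signed peak-count depends only on the multiplicities $u,d,r,l$ and not on the orders of $\vpath$ and $\hpath$, and then to compute the distribution in the canonical case $\vpath = \uparrow^{u}\downarrow^{d}$, $\hpath = \rightarrow^{r}\leftarrow^{l}$, where the run structure, and hence the corner bookkeeping, is simplest.
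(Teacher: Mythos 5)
Your size bookkeeping is correct ($|S_1|=u-k$, $|S_2|=d+k$, and the Vandermonde reduction showing that injectivity of the encoding suffices), but the injectivity itself --- which you explicitly defer as ``where the real work lies'' --- is the entire content of the theorem, so what you have is a plan rather than a proof. Moreover, the difficulty is real, not just unfinished writing. The bits of $a$ impose \emph{forward-looking} constraints (an $\uparrow$ with bit $1$ forbids $\leftarrow$ next; a $\rightarrow$ with bit $1$ forces $\downarrow$ next) while the bits of $b$ impose \emph{backward-looking} ones, and the letters occupying the positive positions are themselves unknown at decoding time. Concretely, suppose in a left-to-right reconstruction the last placed step is a $\downarrow$ and the two candidate next letters are an $\uparrow$ (next unused letter of $\vpath$) and a $\rightarrow$ (next unused letter of $\hpath$). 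Both candidates are positive, so both would receive the same next bit of $a$, and under either reading that bit constrains only the step \emph{after} the one being placed; nothing local distinguishes the two continuations. So a greedy decode stalls, and any proof of injectivity needs a global argument that one of the two branches eventually contradicts the remaining bits. Your fallback is also not easier than the theorem: the statement that the distribution of signed peak-count depends only on $(r,l,u,d)$ and not on the orders of $\vpath$ and $\hpath$ is itself a consequence of the formula, and there is no evident direct proof of it.

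It is worth seeing how the paper sidesteps exactly this obstruction: it encodes a different statistic whose defining data decodes trivially. In Proposition \ref{propNcount} one colors blue precisely the steps that \emph{precede a vertical step}; this data tells you, at every position of the scan, whether the next step comes from $\vpath$ or from $\hpath$, so the shuffle is reconstructed uniquely with no choices ever arising --- that is why the coloring argument closes. Since that coloring enumerates In-Vert rather than signed peak-count, the paper then transfers between the two statistics by the flip involution that reverses out-runs (Proposition \ref{propbiject}), valid when $\vpath$ ends in $\downarrow$, and removes that restriction by complementation and an induction on $r+u$ (Theorem \ref{thmmain}). If you want to salvage your approach, the lesson is to re-engineer the encoding so that membership data determines the \emph{type of the next step} unconditionally, rather than conditionally on the (unknown) identity of the current step; otherwise you must supply the missing global injectivity argument, and small-case checks (where your map does appear to be bijective) are not a substitute for it.
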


Note that Theorem \ref{introthm} immediately leads to a formula for
the difference between the number of even and odd peak-count shuffles of
$\vpath$ and $\hpath$.
\begin{equation}\label{formdiff}
\sum_k (-1)^k{r+u \choose u-k}{l+d \choose d+k}. 
\end{equation}

In certain key cases, Formula \eqref{formdiff} simplifies. When $r =
l$ and $u = d$ (as is the case in Conjecture \ref{conjproven}),
Formula \eqref{formdiff} becomes
$$\sum_k (-1)^k{r+u \choose u-k}{r+u \choose u+k} = {r+u
  \choose u},$$
by equation (30) of \cite{gessel92}.

Additionally, when $r = u$ and $l = d$, Formula
\eqref{formdiff} becomes
$$\sum_k (-1)^k{2r \choose r - k}{2l \choose l + k},$$
which is known \cite[Equation 29]{gessel92} to be the super Catalan number,
$$S(r, k) = \frac{(2r)!(2k)!}{r!k!(r+k)!}.$$ Finding a combinatorial
interpretation for the super Catalan number is an open problem; when
$r \le 3$ or $|r-k| \le 4$, combinatorial interpretations have been
found (\cite{allen14}, \cite{chen12}). Note that our result does not
yield a combinatorial interpretation because it examines the
difference between the cardinalities of two sets, rather than the
cardinality of a single set. As far as we know, however, it is the
first non-contrived example of a combinatorial problem in which the
super Catalan numbers appear.

Peak-count is intimately connected to the study of permutations
sortable by two stacks in parallel. Albert and Bousquet-M{\'e}lou use
the statistic to both characterize the generating function enumerating
such permutations and to analyze the asymptotic number of them
\cite{albert15}. The asymptotic analysis, in particular, brings
peak-count to the spotlight, as the analysis relies on two conjectures
about peak-count which remain open. In addition to the results already
discussed, this paper makes progress on the first of these conjectures
(Conjecture 10 of \cite{albert15}), which concerns the
$(x+1)$-positivity of the generating function for peak-count of
quarter-planar loops. In particular, our results reduce the conjecture
to a new conjecture of a different flavor concerning the relationship
between peak-count and signed peak-count.

The study of permutations sortable by two stacks in parallel traces
back to Section 2.2.1 of \emph{The Art of Computer Programming}
\cite{knuth}, in which Knuth characterized the permutations sortable
by a single stack, as well as those sortable by a double-ended
queue. This spawned the works of Even and Itai \cite{even71}, Pratt
\cite{pratt73}, and Tarjan \cite{tarjan72} which extended Knuth's
investigations to more general networks of stacks and queues,
including the permutations sortable by two stacks in parallel. The
1971 paper of Even and Itai \cite{even71} characterized the
permutations sortable by two stacks in parallel, allowing for the
detection of such a permutation in polynomial time. The 1973 paper of
Pratt \cite{pratt73} then characterized the list of minimal
permutations which cannot be sorted by two stacks in parallel,
connecting the problem to the rapidly growing field of permutation
pattern avoidance\footnote{Interestingly, the origin of the study of
  permutation pattern avoidance is often also traced back to Section
  2.2.1 of \emph{The Art of Computer Programming} \cite{knuth}. The
  first serious study of the combinatorial structures of
  pattern-avoiding permutations, however, was conducted by Simion and
  Schmidt in 1985 \cite{simion85}.}. Despite this, however, the
enumeration\footnote{Several notions of an \emph{enumeration}
  exist. Here we mean a counting technique which can be performed in
  polynomial time. This excludes, for example, the 2012 algorithm of
  Denton \cite{Denton12} which counts the permutations in time
  $O(n^52^n)$.}  of the permutations sortable by two stacks in
parallel remained open for more than forty years, until finally being
solved by Albert and Bousquet-M{\'e}lou in 2014 along with the
introduction of peak-count.

As mentioned previously, Albert and Bousquet-M{\'e}lou's asymptotic
analysis of the number of sortable permutations relies on two
conjectures concerning peak-count \cite{albert15}. We make progress on
the first:

\begin{conj}[Albert, Bousquet-M{\'e}lou, Conjecture 10 on pp. 12 \cite{albert15}]\label{conjunproven}
The generating function as a function of a variable $x$ for peak-count
of quarter-planar loops of a given length can be expressed as a
positive polynomial in $\mathbb{Z}[x+1]$ (i.e., is $(x+1)$-positive).
\end{conj}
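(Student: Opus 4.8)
The plan is to route the problem through the signed peak-count statistic, which by Theorem~\ref{introthm} is completely understood, and only at the very end transfer positivity back to ordinary peak-count. Write $P$ for the peak-count generating polynomial of quarter-planar loops of a fixed length and $\widetilde{P}$ for the analogous polynomial recording the absolute value of signed peak-count on the same set of loops. Because peak-count and signed peak-count always share the same parity, so do peak-count and $|\text{signed peak-count}|$; hence $P$ and $\widetilde{P}$ agree at $x=-1$. This makes it natural to conjecture, and the crux of the approach is to prove, that $P$ is $(x+1)$-positive if and only if $\widetilde{P}$ is. Granting this equivalence, Conjecture~\ref{conjunproven} reduces to establishing $(x+1)$-positivity of $\widetilde{P}$, which is governed by the exact enumeration of Theorem~\ref{introthm}.

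First I would dispose of the signed polynomial in the unrestricted (non-quarter-planar) setting, where Theorem~\ref{introthm} gives, for a loop class with $r=l$ and $u=d$, exactly $\binom{r+u}{u-k}\binom{r+u}{u+k}$ shuffles of signed peak-count $k$. Folding together the symmetric contributions of $k$ and $-k$, the signed polynomial equals $\binom{r+u}{u}^{2}+2\sum_{k\ge 1}\binom{r+u}{u-k}\binom{r+u}{u+k}x^{k}$. To place this in the cone spanned by the $(x+1)^{m}$, I would substitute $x=(x+1)-1$, read off the coefficient of $(x+1)^{m}$ as an alternating binomial sum, and try to recognize it, via the identities of \cite{gessel92} already invoked to simplify Formula~\eqref{formdiff}, as a manifestly nonnegative quantity. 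A more satisfying route would be combinatorial: partition the shuffles into ``cubes'' consisting of a peakless core together with a family of independent optional corners, each toggled on or off, so that every cube contributes a clean factor $(x+1)^{m}$ with no leading power of $x$. The number of cubes would then be forced to equal $\binom{r+u}{u}^{2}$, the count of shuffles with signed peak-count $0$, yielding a bijective proof of positivity; the small case $r=l=u=d=1$, where $\widetilde{P}=4+2x=2(x+1)+2$ splits into two $1$-cubes and two $0$-cubes, suggests this is the right picture.

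The two genuinely hard steps are exactly those separating $\widetilde{P}$ from $P$. The first is the quarter-plane constraint: the product formula of Theorem~\ref{introthm} counts all planar walks, and forcing the walk to stay in the first quadrant destroys its clean factorization, so I would try to restore control of the confined count through a cycle-lemma or reflection argument that tracks how quarter-plane confinement interacts with the corner statistics. The second, and the one I expect to be the real obstacle, is the peak-count versus signed-peak-count equivalence itself: the two statistics share a parity but not a distribution, so $(x+1)$-positivity of one does not formally imply it for the other. The natural instrument is a corner-pairing or sign-reversing involution that cancels each $\Lsh$ against a matching \rotatebox[origin=c]{270}{$\Rsh$}, arranged so that the surviving uncancelled corners play the role of the optional corners of a cube and respect the quarter-plane restriction. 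Constructing such an involution is precisely the equivalence the present paper can only conjecture, and it is where I expect the argument to stall; short of it, the most one can honestly claim is the reduction of Conjecture~\ref{conjunproven} to the signed reformulation, together with the unconstrained positivity of $\widetilde{P}$.
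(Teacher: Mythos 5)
You were asked to prove something that is, in fact, an open conjecture: the paper never proves Conjecture \ref{conjunproven}. Its actual contribution is exactly the reduction you end up with. It enumerates signed peak-count exactly (Theorem \ref{thmmain}), proves $(x+1)$-positivity of the absolute signed peak-count polynomial $F(m,m,n,n)$ (Theorem \ref{thmpos}), and then \emph{poses} the transfer of positivity from signed to ordinary peak-count as Conjecture \ref{conjmain}, so that Conjecture \ref{conjmain} together with Theorem \ref{thmpos} would imply Conjecture \ref{conjunproven}. Your proposal follows the same route, and your honest final assessment --- that the argument necessarily stalls at the peak-count versus signed-peak-count equivalence, and that the most one can claim is the reduction plus positivity of the signed polynomial --- coincides with what the paper establishes. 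Two remarks on your sketch of the signed positivity itself: the substitution $x = (x+1)-1$ followed by recognizing alternating binomial sums is exactly the approach the paper says ``does not easily yield results''; your combinatorial ``cube'' picture is much closer to the paper's actual proof, which partitions binary words into toggle-equivalence classes, except that each class contributes a block $\bin^{j}(2j)$ (a positive, constant-term-free combination of powers of $(x+1)$, by Proposition \ref{propG}) rather than a single clean $(x+1)^m$. Whether a genuine cube partition exists is a strictly stronger statement, related to the paper's Conjecture \ref{conjbuild} on toggle-buildability.

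One concrete error in your list of ``genuinely hard steps'': the quarter-plane constraint is not an obstacle on the signed side, and no reflection or cycle-lemma argument is needed. A walk stays in the quarter plane if and only if both of its projections are positive paths, because the coordinate constraints are prefix conditions on the projections alone. Hence within a shuffle class either every shuffle is quarter-planar or none is, and the quarter-planar loops of a given length decompose as a disjoint union of \emph{full} shuffle classes indexed by pairs of positive projection loops. Since Theorem \ref{thmmain} makes the signed distribution independent of the choice of projection words, Theorem \ref{thmpos} applies to each class verbatim and positivity passes to the sum. This is precisely why the paper can state its positivity result for arbitrary fixed projections and still have it bear on the quarter-planar conjecture. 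The only genuine gap --- for you and for the paper --- is the equivalence you describe in your last paragraph (your sign-reversing involution matching $\Lsh$ corners against ES corners), i.e., Conjecture \ref{conjmain}; note also that the paper states this equivalence class-by-class, for a fixed positive vertical projection, which is a finer claim than the aggregate equivalence between your $P$ and $\widetilde{P}$.
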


In Section \ref{secpos}, we prove a variant of Conjecture
\ref{conjunproven} applying to signed peak-count rather than to
peak-count. In particular, we prove that the polynomial counting loops
according to the absolute value of their signed peak-count is
$(x+1)$-positive, even when we restrict ourselves to loops with fixed
horizontal and vertical projections. In Section \ref{secconj}, we go
on to pose Conjecture \ref{conjmain} which reduces the study of
$(x+1)$-positivity of peak-count to the study of $(x+1)$-positivity of
the absolute value of signed peak-count. Due to our results on
$(x+1)$-positivity of absolute signed peak-count, it follows that
Conjecture \ref{conjmain} implies Conjecture
\ref{conjunproven}.

The outline of this paper is as follows. In Section
\ref{secsignedlcount}, we enumerate signed peak-count of shuffles;
additionally, we enumerate peak-count modulo 2. In Section \ref{secpos},
we prove our results on $(x+1)$-positivity. In Section \ref{secconj},
we conclude with conjectures and directions of future work.

\section{Enumerating Signed peak-count}\label{secsignedlcount}

In this section, we derive a formula for the number of shuffles with a
given signed peak-count. To accomplish this, we introduce a subtly
related statistic called shifted In-Vert and, under certain
conditions, we provide a bijection establishing the equidistribution
of shifted In-Vert and signed peak-count. We begin by introducing
some conventions.

For the rest of this section, let $r, l, u, d$ be non-negative
integers. Moreover, fix $\vpath$ (resp. $\hpath$) to be a word
comprising $r$ $\rightarrow$ steps (resp. $u$ $\uparrow$ steps) and
$l$ $\leftarrow$ steps (resp. $d$ $\downarrow$ steps). In particular,
$\rightarrow$, $\leftarrow$, $\uparrow$, and $\downarrow$ correspond
to East, West, North, and South respectively.

\begin{defn}
A shuffle of $\vpath$ and $\hpath$ is an intertwining of the steps in
$\vpath$ and $\hpath$. In addition, a shuffle has the $\swarrow$ in
its zero-th position.
\end{defn}

\begin{rem}
The convention of $\swarrow$ preceding the first step is
non-standard. But it will facilitate the definition of a statistic
called shifted In-Vert.
\end{rem}

We will use $\shuffles$ to denote the set of shuffles of $\vpath$ and $\hpath$.

\begin{ex}
  If $\vpath = \uparrow \downarrow$ and $\hpath = \rightarrow  \rightarrow$, then $\shuffles$ is the set:
\[
  \begin{array}{r c c c c c l}
\{&\swarrow& \rightarrow& \rightarrow& \uparrow& \downarrow &,\\
&\swarrow& \rightarrow& \uparrow& \rightarrow& \downarrow &,\\
&\swarrow& \uparrow& \rightarrow& \rightarrow& \downarrow &,\\
&\swarrow& \rightarrow& \uparrow& \downarrow& \rightarrow &,\\
&\swarrow& \uparrow& \rightarrow& \downarrow& \rightarrow &,\\
&\swarrow& \uparrow& \downarrow& \rightarrow& \rightarrow &\}\\

  \end{array}
\]
\end{ex}

\begin{defn}
For $\sigma \in \shuffles$, we use $\#(A, B)_\sigma$ to denote the
number of occurrences in $\sigma$ of an element of set $A$ followed
immediately by an element of set $B$. If $A$ or $B$ contains only a
single element, we omit set braces around that element. For example,
$A = \{\leftarrow\}$ is written as $A = \leftarrow$.
\end{defn}

\begin{defn}
We use the following short-hands for certain sets: $\horiz =
\{\rightarrow, \leftarrow\}$ is the set of horizontal steps,
$\vertical = \{\uparrow, \downarrow\}$ is the set of vertical steps,
$\inn = \{\leftarrow, \downarrow, \swarrow\}$ is the set of inward
steps, and $\out = \{\rightarrow, \uparrow \}$ is the set of outward
steps.
\end{defn}

The names \emph{inward} and \emph{outward} take the perspective of the
first quadrant of the plane, in which $\leftarrow$, $\downarrow$ and
$\swarrow$ all point inward towards the origin, and both $\rightarrow$ and
$\uparrow$ point outward away from the origin.

\begin{ex}
The value $\#(\inn, \uparrow)_\sigma$ counts instances of a horizontal step
followed by an $\uparrow$. For example, if $\sigma = \overline{\swarrow,
  \uparrow}, \leftarrow, \downarrow, \rightarrow,
\overline{\downarrow, \uparrow}, \uparrow$, then $\#(\inn, \uparrow)_\sigma
= 2$ and the instances are over-lined.
\end{ex}

\begin{defn}
The signed peak-count of $\sigma \in \shuffles$ is $\#(\uparrow, \leftarrow)_\sigma - \#(\rightarrow, \downarrow)_\sigma$.
\end{defn}

We begin by reformulating signed peak-count:
\begin{lem}\label{lemL}
The signed peak-count of $\sigma \in \shuffles$ can be expressed as $$\#(\uparrow, \inn)_\sigma - \#(\out, \downarrow)_\sigma.$$
\end{lem}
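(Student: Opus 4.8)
The plan is to prove the identity by expanding each of the two ``coarse'' adjacency counts appearing on the right-hand side into its constituent elementary two-step counts, and then checking that everything not already present in the definition of signed peak-count cancels. Since $\inn = \{\leftarrow, \downarrow, \swarrow\}$ is a disjoint union, counting an $\uparrow$ immediately followed by an inward step splits as
\[
\#(\uparrow, \inn)_\sigma = \#(\uparrow, \leftarrow)_\sigma + \#(\uparrow, \downarrow)_\sigma + \#(\uparrow, \swarrow)_\sigma.
\]
The first key observation is that the last term vanishes: by the convention that $\swarrow$ occupies only the zero-th position of every shuffle, no step is ever immediately followed by $\swarrow$, so $\#(\uparrow, \swarrow)_\sigma = 0$. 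This is precisely the place where the non-standard placement of $\swarrow$ pays off, and it is the only subtlety in the argument.

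Next I would expand the second count in the same way. Since $\out = \{\rightarrow, \uparrow\}$ is a disjoint union,
\[
\#(\out, \downarrow)_\sigma = \#(\rightarrow, \downarrow)_\sigma + \#(\uparrow, \downarrow)_\sigma.
\]
Subtracting the two expansions, the common term $\#(\uparrow, \downarrow)_\sigma$ cancels, leaving
\[
\#(\uparrow, \inn)_\sigma - \#(\out, \downarrow)_\sigma = \#(\uparrow, \leftarrow)_\sigma - \#(\rightarrow, \downarrow)_\sigma,
\]
which is exactly the definition of the signed peak-count of $\sigma$. This completes the argument.

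There is no real obstacle here beyond bookkeeping: the entire lemma is a reorganization of a finite sum of adjacency counts, valid termwise for each fixed $\sigma \in \shuffles$. The only point that genuinely requires care is verifying that $\#(\uparrow, \swarrow)_\sigma = 0$, i.e.\ that padding each shuffle with an initial $\swarrow$ cannot create a spurious $\uparrow$-to-inward adjacency. Once that is granted, the cancellation of the $\uparrow\downarrow$ contributions is automatic and the claimed identity follows immediately.
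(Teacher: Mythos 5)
Your proof is correct and is essentially the paper's own argument run in reverse: the paper starts from the definition, adds and subtracts $\#(\uparrow, \downarrow)_\sigma$, and combines terms, while you expand the right-hand side and cancel, which is the same decomposition. Your explicit check that $\#(\uparrow, \swarrow)_\sigma = 0$ is a point the paper leaves implicit, so this is a slight improvement in rigor rather than a different approach.
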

\begin{proof}
By definition, signed peak-count is  $\#(\uparrow, \leftarrow)_\sigma - \#(\rightarrow, \downarrow)_\sigma$. This is equal to 
$$\#(\uparrow, \leftarrow)_\sigma + \#(\uparrow, \downarrow)_\sigma - \#(\rightarrow, \downarrow)_\sigma - \#(\uparrow, \downarrow)_\sigma,$$
which combines to
$$\#(\uparrow, \inn)_\sigma - \#(\out, \downarrow)_\sigma.$$
\end{proof}

Next we introduce \emph{shifted In-Vert}, which is easier to study
than peak-count. Interestingly, Proposition \ref{propbiject} will show
that the two statistics are equidistributed in certain key cases.
\begin{defn}
The \emph{In-Vert} of a shuffle $\sigma \in \shuffles$ is $\#(\inn, \vertical)_\sigma$. The
\emph{shifted In-Vert} is $\#(\inn, \vertical)_\sigma - d$. (Recall that $d$
is the number of $\downarrow$'s in $\sigma$.)
\end{defn}

It will be useful to have the following reformulation of shifted In-Vert.
\begin{lem}\label{lemN}
The shifted In-Vert of $\sigma\in \shuffles$ can be expressed as $$\#(\inn, \uparrow)_\sigma -
\#(\out, \downarrow)_\sigma.$$
\end{lem}
\begin{proof}
Because shuffles cannot begin with $\downarrow$ (due to the
$\swarrow$), every $\downarrow$ is preceded by either an inward or an
outward step. Consequently, we can rewrite $d$ as $(\#(\out,
\downarrow)_\sigma + \#(\inn, \downarrow)_\sigma)$. Thus the shifted
In-Vert $\#(\inn, \vertical)_\sigma - d$ expands to
$$(\#(\inn, \uparrow)_\sigma + \#(\inn, \downarrow)_\sigma) - (\#(\out, \downarrow)_\sigma +
\#(\inn, \downarrow)_\sigma),$$ 
which simplifies to 
$$\#(\inn, \uparrow)_\sigma - \#(\out, \downarrow)_\sigma.$$
\end{proof}

Next we construct an involution $f: \shuffles \rightarrow \shuffles$ such that, under
certain conditions, the shifted In-Vert of $f(\sigma)$ is the same as the
signed peak-count of $\sigma$.

\begin{defn}
A \emph{out-run} in $\sigma \in \shuffles$ is a (non-empty) maximal contiguous subsequence of outward steps.
\end{defn}

\begin{defn}
The \emph{flip} $f(\sigma)$ of a shuffle $\sigma \in \shuffles$, is obtained by replacing every out-run in $\sigma$ with the same out-run written backwards.
\end{defn}

\begin{ex}
For example, if 
$$\sigma = \swarrow, \rightarrow, \rightarrow, \uparrow, \leftarrow, \downarrow, \uparrow, \rightarrow, \leftarrow, \leftarrow, \uparrow,$$ then the out-runs are
$\rightarrow,\rightarrow,\uparrow$, in addition to $\uparrow, \rightarrow$, and $\uparrow$. Thus the flip of $\sigma$ is
$$f(\sigma) = \swarrow, \uparrow, \rightarrow, \rightarrow, \leftarrow, \downarrow, \rightarrow, \uparrow, \leftarrow, \leftarrow, \uparrow.$$
\end{ex}

\begin{prop}\label{propbiject}
Suppose the final step of $\vpath$ is $\downarrow$. Then for all $\sigma \in \shuffles$ the
shifted In-Vert of $f(\sigma)$ is the same as the signed peak-count of $\sigma$.
\end{prop}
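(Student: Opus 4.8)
The plan is to reduce the claimed equality to two occurrence-count identities and then settle each by counting out-runs. By Lemma \ref{lemN}, the shifted In-Vert of $f(\sigma)$ equals $\#(\inn, \uparrow)_{f(\sigma)} - \#(\out, \downarrow)_{f(\sigma)}$, while by Lemma \ref{lemL} the signed peak-count of $\sigma$ equals $\#(\uparrow, \inn)_\sigma - \#(\out, \downarrow)_\sigma$. So it suffices to prove the two identities $\#(\out, \downarrow)_{f(\sigma)} = \#(\out, \downarrow)_\sigma$ and $\#(\inn, \uparrow)_{f(\sigma)} = \#(\uparrow, \inn)_\sigma$; subtracting the first from the second then yields the proposition.

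The first identity is the easy one. The flip reverses each out-run in place, so it preserves the set of positions occupied by outward steps and fixes every inward step (in particular every $\downarrow$ and the leading $\swarrow$). Consequently each $\downarrow$ occupies the same position in $\sigma$ and in $f(\sigma)$, and its immediate predecessor is an outward step in one word exactly when it is in the other; only the identity of that outward step (whether $\rightarrow$ or $\uparrow$) can change, never the fact that it is outward. Hence $\#(\out, \downarrow)$ is invariant under $f$.

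The second identity is the heart of the argument, and I would prove it by showing both sides count the out-runs of $\sigma$ that end in $\uparrow$. On the $\sigma$ side, an occurrence of $(\uparrow, \inn)$ is an $\uparrow$ immediately followed by an inward step; since the next step is inward, this $\uparrow$ must be the last step of its out-run, and conversely every out-run ending in $\uparrow$ that is followed by a (necessarily inward) step produces exactly one such occurrence. Because out-runs are maximal, the only out-run that might fail to be followed by an inward step is a terminal out-run at the very end of $\sigma$. This is where the hypothesis enters: since the final step of $\vpath$ is $\downarrow$, the last vertical step of $\sigma$ is $\downarrow$, so the last step of $\sigma$ is never $\uparrow$, and thus no terminal out-run ends in $\uparrow$. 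Therefore $\#(\uparrow, \inn)_\sigma$ equals the number of out-runs of $\sigma$ ending in $\uparrow$.

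On the $f(\sigma)$ side, an occurrence of $(\inn, \uparrow)$ is an $\uparrow$ immediately preceded by an inward step, so this $\uparrow$ begins its out-run. Because $\sigma$, and hence $f(\sigma)$, begins with $\swarrow$, every out-run is preceded by an inward step, so $\#(\inn, \uparrow)_{f(\sigma)}$ counts precisely the out-runs of $f(\sigma)$ that begin with $\uparrow$. Since reversing an out-run sends its last step to its first, an out-run of $f(\sigma)$ begins with $\uparrow$ exactly when the corresponding out-run of $\sigma$ ends with $\uparrow$; thus $\#(\inn, \uparrow)_{f(\sigma)}$ also equals the number of out-runs of $\sigma$ ending in $\uparrow$, matching the count above. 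I expect the only delicate point to be this boundary bookkeeping at the two ends of the word—the leading $\swarrow$ guaranteeing that every out-run has an inward predecessor, and the hypothesis on $\vpath$ ruling out a trailing $\uparrow$; away from the endpoints the correspondence between occurrences and out-run endpoints is exact.
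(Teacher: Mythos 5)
Your proof is correct and follows essentially the same route as the paper's: both reduce via Lemmas \ref{lemL} and \ref{lemN} to the two identities $\#(\out, \downarrow)_{f(\sigma)} = \#(\out, \downarrow)_\sigma$ and $\#(\inn, \uparrow)_{f(\sigma)} = \#(\uparrow, \inn)_\sigma$, and both use the hypothesis on $\vpath$ to rule out a terminal out-run ending in $\uparrow$ so that occurrences correspond to endpoints of out-runs, which flipping exchanges. Your write-up merely makes explicit the out-run counting that the paper leaves terse; there is no substantive difference.
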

\begin{proof}
Since the final step of $\vpath$ is $\downarrow$, if $\sigma$ ends
with a out-run, then that out-run must consist only of
$\rightarrow$'s. Thus every out-run in $\sigma$ containing at least
one $\uparrow$ is both preceded and followed by an inward
step. Consequently, $\#(\inn, \uparrow)_{f(\sigma)} = \#(\uparrow,
\inn)_\sigma,$ since flipping a shuffle reverses the order of every
out-run. Additionally, it is easy to see that $\#(\out,
\downarrow)_{f(\sigma)} = \#(\out, \downarrow)_{\sigma}$.

By Lemma \ref{lemN}, the shifted In-Vert of $f(\sigma)$ is $\#(\inn,
\uparrow)_{f(\sigma)} - \#(\out, \downarrow)_{f(\sigma)},$ which thus equals $\#(\uparrow, \inn)_\sigma -
\#(\out, \downarrow)_\sigma$. By Lemma \ref{lemL}, this is exactly the signed
peak-count of $\sigma$.
\end{proof}

Next, we enumerate In-Vert in the case where the final letter in
$\vpath$ is $\downarrow$.
\begin{prop}\label{propNcount}
Let $\vpath$ and $\hpath$ be vertical and horizontal paths with a
total of $u$ $\uparrow$'s, $d \downarrow$'s, $r$ $\rightarrow$'s, and
$l$ $\leftarrow$'s. Suppose the final step of $\vpath$ is
$\downarrow$. Then the number of shuffles $\sigma \in \shuffles$ with
In-Vert $k$ is
$${r+u \choose u+d-k}{l+d \choose k}.$$
\end{prop}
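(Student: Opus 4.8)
The plan is to prove the proposition by an explicit bijection between $\shuffles$ and the family of $(u+d)$-element subsets of a fixed ground set, designed so that the In-Vert of a shuffle is literally read off as the number of chosen elements of a distinguished ``inward'' type. Since there are $\binom{r+u+l+d}{u+d}$ shuffles, and this is exactly the number of $(u+d)$-subsets of an $(r+u+l+d)$-element set, once the bijection is in place the count with In-Vert $=k$ becomes a routine choice of $k$ inward and $u+d-k$ outward elements, giving $\binom{r+u}{u+d-k}\binom{l+d}{k}$.

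Concretely, to each $\sigma\in\shuffles$ I would associate the set $T(\sigma)$ of immediate predecessors of its $u+d$ vertical steps: for every $\uparrow$ or $\downarrow$ in $\sigma$ I record the step-occurrence directly to its left (this is well defined precisely because the leading $\swarrow$ gives every step a left neighbor). Regarding the steps as distinguishable occurrences (indexed by their positions in $\vpath$ and $\hpath$), no two vertical steps can share a predecessor, so $|T(\sigma)|=u+d$. Furthermore a step belongs to $T(\sigma)$ exactly when it is immediately followed by a vertical step, so directly from the definitions $\#(\inn,\vertical)_\sigma = |T(\sigma)\cap\inn|$; that is, In-Vert is the number of inward elements of $T(\sigma)$.

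Next I would pin down the ground set and see where the hypothesis enters. The final vertical step of $\sigma$ is the last letter of $\vpath$, and it is never a predecessor (any step to its right must be horizontal). By hypothesis this last letter is $\downarrow$, hence inward, so $T(\sigma)$ always avoids it and lies in the fixed ground set $G$ consisting of $\swarrow$ together with every letter of $\vpath$ and $\hpath$ except this final $\downarrow$. The set $G$ has exactly $r+u$ outward steps (all $\rightarrow$'s and $\uparrow$'s) and exactly $l+d$ inward steps (the $\swarrow$, the $l$ $\leftarrow$'s, and the remaining $d-1$ $\downarrow$'s). Thus $\sigma\mapsto T(\sigma)$ sends $\shuffles$ into the $(u+d)$-subsets of $G$, with exactly $k$ inward elements chosen when In-Vert $=k$; counting such subsets yields $\binom{l+d}{k}\binom{r+u}{u+d-k}$. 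This also makes transparent why $\vpath$ must end in $\downarrow$: it is precisely what places the deleted step on the inward side of $G$ (had the final vertical step been $\uparrow$, one would instead get $\binom{l+d+1}{k}\binom{r+u-1}{u+d-k}$).

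The heart of the argument, and the step I expect to be the main obstacle, is showing that $\sigma\mapsto T(\sigma)$ is a bijection. Because the domain and codomain are finite of equal size $\binom{r+u+l+d}{u+d}$, it suffices to recover $\sigma$ from $T:=T(\sigma)$. I would do this through the insertion model in which the vertical steps $v_1\cdots v_{u+d}$ are threaded, in order, into the gaps of the word $\swarrow\,h_1\cdots h_{r+l}$ (one gap after $\swarrow$ and one after each $h_i$). The horizontal steps together with $\swarrow$ that lie in $T$ are exactly those whose following gap is nonempty, so $T$ determines which gaps receive vertical steps; and a vertical step $v_j$ lies in $T$ exactly when $v_j$ and $v_{j+1}$ share a gap, so $T$ determines the cut points splitting $v_1\cdots v_{u+d}$ into maximal vertical runs. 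Since verticals are threaded in order, the runs must fill the nonempty gaps from left to right, so this data reconstructs $\sigma$ uniquely. The one consistency point to verify is that the identity $|T|=u+d$ forces the number of vertical runs to equal the number of nonempty gaps, which is what makes the forced left-to-right placement coherent and confirms that the reconstructed shuffle has predecessor set exactly $T$.
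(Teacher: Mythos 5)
Your proposal is correct and is essentially the paper's own proof: your predecessor sets $T(\sigma)$ inside the ground set $G$ are exactly the paper's ``blue-red colorings'' (blue $=$ steps preceding a vertical step, with the final $\downarrow$ swapped for $\swarrow$ when chosen), and your gap/run reconstruction plays the same role as the paper's sequential reconstruction of $\sigma$ from a coloring. The only cosmetic difference is that you obtain surjectivity from the cardinality identity $|\shuffles| = \binom{r+l+u+d}{u+d}$, whereas the paper verifies directly that every coloring fits exactly one shuffle.
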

\begin{proof}
Pick $u+d-k$ of the $\rightarrow$'s and $\uparrow$'s in $\hpath$ and
$\vpath$. Pick $k$ of the $\leftarrow$'s and $\downarrow$'s in
$\hpath$ and $\vpath$. If the final $\downarrow$ in $\vpath$ is
picked, then replace it by picking $\swarrow$ instead. We call the
steps which have just been picked \emph{blue}, and the remaining steps
\emph{red}. We call such a coloring a \emph{blue-red coloring}. By
definition, there are ${r+u \choose u+d-k}{l+d \choose k}$ blue-red
colorings.

Given a blue-red coloring $C$ and a shuffle $\sigma \in \shuffles$, we say that $C$
\emph{fits} $\sigma$ if the blue steps in $C$ are exactly the steps in
$\sigma$ which precede a vertical step. We will show that for every
$\sigma$ with In-Vert $k$ there is a $C$ fitting $\sigma$, and that
for every $C$ there is exactly one $\sigma$ with In-Vert $k$ which $C$
fits. Consequently, the number of blue-red colorings $C$ is equal to
the number of $\sigma \in \shuffles$ with In-Vert $k$.

First we show that for every $\sigma \in \shuffles$ with In-Vert $k$
there is a blue-red coloring $C$ which fits $k$. Select $C$ to color a
step blue if it precedes a vertical step in $\sigma$ and red
otherwise. Because $\sigma$ has In-Vert $k$, exactly $k$ inward steps
will be blue, while $u+d-k$ outward steps will be blue. Additionally,
because the final step of $\vpath$ is $\downarrow$, it cannot precede
a vertical step in $\sigma$, and will not be blue. Thus $C$ is a valid
blue-red coloring which fits $\sigma$.

Next, in order to complete the proof, we show that every blue-red
coloring fits exactly one $\sigma \in \shuffles$ with In-Vert $k$. We
may uniquely construct $\sigma$ as follows. By definition, the first
step is $\swarrow$. Given the first $j$ steps, the $(j+1)$-th step
must be vertical if the $j$-th step is blue, and horizontal
otherwise. Therefore, the $(j+1)$-th step is either forced to be the
next unused step of $\vpath$ or forced to be the next unused step of
$\hpath$. Consequently, all of $\sigma$ is uniquely determined by the
blue-red coloring. Observe that the construction uses exactly $u+d$
steps from $\vpath$ since exactly $u+d$ steps are blue, and thus also
uses exactly $r+l$ steps from $\hpath$. Consequently, the construction
is well-defined, yielding a valid shuffle $\sigma$ with In-Vert $k$.
\end{proof}

Using Proposition \ref{propbiject} and Proposition \ref{propNcount},
it is easy to enumerate signed peak-count when the final step of $\vpath$
is $\downarrow$. In fact, with a little more work, one can remove the
restriction that $\vpath$ ends with $\downarrow$. This brings us to our main
result of the section.
\begin{thm}\label{thmmain}
Let $\vpath$ and $\hpath$ be vertical and horizontal paths with a
total of $u$ $\uparrow$'s, $d \downarrow$'s, $r$ $\rightarrow$'s, and
$l$ $\leftarrow$'s. The number of $\sigma \in \shuffles$ with signed
peak-count $k$ is
$${r+u \choose u-k}{l+d \choose d+k}.$$
\end{thm}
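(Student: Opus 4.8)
The plan is to reduce the general statement to the case already prepared by Propositions \ref{propbiject} and \ref{propNcount}, namely the case in which the final step of $\vpath$ is $\downarrow$. In that case the argument is immediate: since the flip $f$ is an involution of $\shuffles$ onto itself and, by Proposition \ref{propbiject}, the shifted In-Vert of $f(\sigma)$ equals the signed peak-count of $\sigma$, the number of $\sigma$ with signed peak-count $k$ equals the number of shuffles with shifted In-Vert $k$, i.e.\ with In-Vert $k+d$. Substituting $k+d$ into Proposition \ref{propNcount} gives
\[
\binom{r+u}{u+d-(k+d)}\binom{l+d}{k+d}=\binom{r+u}{u-k}\binom{l+d}{d+k},
\]
which is exactly the claimed formula. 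So everything hinges on removing the hypothesis that $\vpath$ ends in $\downarrow$.

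To do this I would prove an \emph{invariance lemma}: the distribution of signed peak-count over $\shuffles$ depends only on the four counts $u,d,r,l$, and not on the particular orderings of the letters within $\vpath$ and within $\hpath$. Granting this, whenever $d\ge 1$ I may replace $\vpath$ by any reordering of its letters that ends in $\downarrow$ without changing the distribution, and then invoke the computation above. The natural way to establish the lemma is to exhibit, for each transposition of two consecutive letters of $\vpath$ (and symmetrically of $\hpath$), a signed-peak-count-preserving bijection between the two shuffle sets; since any two words on a fixed alphabet with fixed letter-counts are connected by such adjacent transpositions, this yields the full invariance.

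The cases not covered by $d\ge 1$ are then handled by a symmetry and a base case. When $d=0$ but $l\ge 1$, reflecting every shuffle across the line $y=x$ (which fixes $\swarrow$, exchanges $\uparrow\leftrightarrow\rightarrow$ and $\downarrow\leftrightarrow\leftarrow$, and negates signed peak-count) carries $\shuffles$ bijectively onto the shuffle set for the reflected paths, whose vertical path now has $l\ge 1$ down-steps; applying the $d\ge 1$ result there and translating back through the binomial symmetries $\binom{r+u}{u-k}=\binom{r+u}{r+k}$ and $\binom{l+d}{d+k}=\binom{l+d}{l-k}$ recovers the desired count. The remaining base case $d=l=0$ is trivial: then $\sigma$ contains no $\leftarrow$ or $\downarrow$, so its signed peak-count is always $0$, there are $\binom{r+u}{u}$ shuffles in all, and the formula correctly returns $\binom{r+u}{u}$ at $k=0$ and $0$ otherwise.

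The main obstacle is the invariance lemma, specifically constructing the signed-peak-count-preserving bijection for an adjacent transposition. The obvious candidate---keeping the interleaving pattern fixed and simply swapping the two vertical letters in place---fails: swapping an adjacent $\uparrow\downarrow$ to $\downarrow\uparrow$ changes the signed peak-count by $-[p=\rightarrow]+[q=\leftarrow]$, where $p$ and $q$ are the steps flanking the swapped pair, so it is not locally invariant. The bijection must therefore rearrange the interleaving globally in order to absorb these boundary contributions, and producing such a map (or, alternatively, an algebraic argument showing the two generating functions agree) is where the real work lies.
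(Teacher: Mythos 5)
There is a genuine gap, and you have named it yourself: your entire reduction hinges on the \emph{invariance lemma} (that the distribution of signed peak-count depends only on $r,l,u,d$ and not on the orderings within $\vpath$ and $\hpath$), and you do not prove it. You correctly observe that the naive bijection for an adjacent transposition fails, and you leave the construction of a working bijection as ``where the real work lies.'' But that lemma is essentially as strong as the theorem itself --- indeed it is an immediate corollary of the theorem's formula, which makes no reference to the orderings --- so reducing the theorem to it without a proof leaves the hard part untouched. Your Cases ($\vpath$ ends in $\downarrow$ via Propositions \ref{propbiject} and \ref{propNcount}; reflection across $y=x$ when $d=0$, $l\ge 1$; the trivial case $d=l=0$) are all sound and match the paper's Case 1, Case 2, and empty-path observation, but they only cover the situations the paper also gets ``for free.''

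The paper closes the remaining case --- both $\vpath$ and $\hpath$ ending in outward steps --- without any invariance lemma, by induction on $r+u$. In that case every shuffle ends in $\rightarrow$ or $\uparrow$; deleting this final step is a signed-peak-count-preserving bijection onto the shuffles of the shortened paths (an outward step in final position can complete neither an $(\uparrow,\leftarrow)$ nor a $(\rightarrow,\downarrow)$ pattern), so by the inductive hypothesis the two subcases contribute
$$\binom{(r-1)+u}{u-k}\binom{l+d}{d+k} \quad\text{and}\quad \binom{r+(u-1)}{(u-1)-k}\binom{l+d}{d+k},$$
which sum to the claimed formula by Pascal's rule. Note that the shortened path may again fall into this case or into one of your already-settled base cases, so the induction is self-contained. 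If you replace your invariance lemma with this induction, your argument becomes a complete proof and in fact coincides with the paper's.
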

\begin{proof}
There are two cases which Proposition \ref{propbiject} gives us for
free.
  \begin{itemize}
  \item \textbf{Case 1: The final step of $\vpath$ is $\downarrow$.} By Proposition
    \ref{propbiject} the number of shuffles with signed peak-count $k$ is the
    number with shifted In-Vert $k$, and thus In-Vert $d+k$. By
    Proposition \ref{propNcount}, this is $${r+u \choose u+d-(d+k)}{l+d
      \choose (d+k)} = {r+u \choose u-k}{l+d \choose d+k}.$$
  \item \textbf{Case 2: The final step of $\hpath$ is $\leftarrow$.}
    The signed peak-count of a shuffle is the negative of the signed
    peak-count of the complement shuffle in which all $\rightarrow$'s
    become $\uparrow$'s, all $\leftarrow$'s become $\downarrow$'s, and
    vice-versa. By case (1), the number of such complement shuffles
    with signed peak-count $-k$ is
    $${u+r \choose r+k}{d+l \choose l-k} = {r+u \choose u-k}{l+d \choose
      d+k}.$$
  \end{itemize}
Note that if one of $\hpath$ or $\vpath$ is empty, then the theorem is
trivially true. If neither Case 1 nor Case 2 holds, and neither
$\hpath$ nor $\vpath$ are empty paths, then the final steps of
$\vpath$ and of $\hpath$ must both be outward. We will prove this
final case by induction on $r + u$, using the previous cases as base
cases.
  \begin{itemize}
  \item \textbf{Case 3: The final steps of $\vpath$ and of $\hpath$ are both
    outward.} First consider shuffles ending with $\rightarrow$. Ignoring the
    final step, we see by induction that the number
    with signed peak-count $k$ is $${(r-1)+u \choose u-k}{l+d \choose
      d+k}.$$
    
    Next consider shuffles ending with $\uparrow$. Ignoring the final step, we
    see by induction that the number with signed peak-count $k$ is
    $${r+(u-1) \choose (u-1)-k}{l+d \choose d+k}.$$
    
    Summing over the two sub-cases yields $${r+u \choose u-k}{l+d \choose
      d+k}.$$
  \end{itemize}
\end{proof}

Recall that peak-count and signed peak-count share
parity. Consequently, Theorem \ref{thmmain} gives a formula for
peak-count modulo 2:
\begin{cor}\label{cormod2}
The number of $\sigma \in \shuffles$ with even peak-count minus the
number with odd peak-count is
$$\sum_k (-1)^k{r+u \choose u-k}{l+d \choose d+k}.$$ 
\end{cor}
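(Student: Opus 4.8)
The plan is to realize this as an immediate consequence of Theorem \ref{thmmain}, once we confirm that peak-count and signed peak-count agree modulo $2$. First I would write the target quantity---the number of shuffles with even peak-count minus the number with odd peak-count---as the signed sum $\sum_{\sigma \in \shuffles} (-1)^{p(\sigma)}$, where $p(\sigma)$ denotes the peak-count of $\sigma$. Each shuffle contributes $+1$ or $-1$ according to whether its peak-count is even or odd, so this sum is exactly the desired difference.

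Next I would argue that $(-1)^{p(\sigma)} = (-1)^{s(\sigma)}$, where $s(\sigma)$ is the signed peak-count. Since $p(\sigma) = \#(\uparrow, \leftarrow)_\sigma + \#(\rightarrow, \downarrow)_\sigma$ while $s(\sigma) = \#(\uparrow, \leftarrow)_\sigma - \#(\rightarrow, \downarrow)_\sigma$, their difference is $2\,\#(\rightarrow, \downarrow)_\sigma$, which is even. Hence the two statistics share parity, and we may replace $p$ by $s$ in the signed sum without changing its value.

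Finally, I would group the shuffles according to their signed peak-count. Writing $N_k$ for the number of $\sigma \in \shuffles$ with signed peak-count $k$, the sum becomes $\sum_k (-1)^k N_k$; substituting the value $N_k = {r+u \choose u-k}{l+d \choose d+k}$ supplied by Theorem \ref{thmmain} yields the claimed formula. I do not anticipate any genuine obstacle here: all the combinatorial content is front-loaded into Theorem \ref{thmmain}, and the only thing that needs independent verification is the elementary parity identity above. The corollary is therefore a bookkeeping step rather than a new argument, and the one place to be slightly careful is simply to note that the sum over $k$ is finite (the binomial coefficients vanish outside a bounded range of $k$), so no convergence issue arises.
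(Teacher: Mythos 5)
Your proposal is correct and follows the same route as the paper: the paper also deduces the corollary from Theorem \ref{thmmain} via the observation that peak-count and signed peak-count share parity. Your explicit verification of the parity identity $p(\sigma) - s(\sigma) = 2\,\#(\rightarrow,\downarrow)_\sigma$ simply spells out a step the paper asserts without proof.
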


When $r = l$ and $u = d$, this resolves the Conjecture
\ref{conjproven}, since $$\sum_k (-1)^k{r+u \choose u-k}{r+u \choose
  u+k} = {r+u \choose u}.$$ (See Equation (30) of \cite{gessel92}.)
When $a= u$ and $l=d$, the sum yields the super Catalan numbers. (See
Equation (29) of \cite{gessel92}.)

\section{(x+1)-Positivity of Signed Peak-Count of Loops}\label{secpos}

A polynomial in $\mathbb{Z}[x]$ is \emph{$(x+1)$-positive} if it is in
$\NN[x+1]$, where $\NN$ denotes the non-negative integers.

Recall that Conjecture \ref{conjunproven}, which is Conjecture 10 of
\cite{albert15}, states that the polynomial counting quarter-planar
loops of a given length by peak-count is $(x+1)$-positive. In
particular, this result is then used in an analysis of the asymptotic
number of permutations sortable by two stacks in parallel.

In this section, we prove that a similar but stronger statement can be
made about the absolute value of signed peak-count of loops. This is
of particular interest due to conjectures we pose in Section
\ref{secconj} connecting $(x+1)$-positivity of peak-count and signed
peak-count. In fact, our conjectures combined with Theorem
\ref{thmpos} imply Conjecture \ref{conjunproven}.

One approach to proving a polynomial is $(x+1)$-positive is to
evaluate the polynomial at $(x-1)$ and then to prove that the
coefficients of the new polynomial are non-negative. For absolute
value of signed peak-count, however, this approach does not easily
yield results. Instead, we introduce $(x+1)$-positive building-block
polynomials out of which we construct the polynomial counting shuffles
by absolute value of signed peak-count.

We begin by establishing notation.
\begin{defn}
  Let $\vpath$ (resp. $\hpath$) be any path consisting of $u$
  $\uparrow$'s (resp. $r$ $\rightarrow$'s) and $d$ $\downarrow$'s
  (resp. $l$ $\leftarrow$'s). Let $F(r, l, u,d)$ be the generating
  function in the variable $x$ counting shuffles of $\vpath$ and
  $\hpath$ according to the absolute value of the signed peak-count.
\end{defn}

By Theorem \ref{thmmain}, $F(r, l, u, d)$ depends only on $r$, $l$,
$u$, and $d$ and not $\vpath$ or $\hpath$, making it well defined.  The
main result of this section is Theorem \ref{thmpos} which proves that
$F(m, m, n, n)$ is $(x+1)$-positive and results in a formula for $F(m,
m, n, n)$ expanded in powers of $(x+1)$. To accomplish this, we define a
set of words $W^m_n$ and a simple statistic called \emph{absolute even
  count} such that $F(m, m, n, n)$ counts words in $W^m_n$ according
to the statistic. We then partition $W^m_n$ so that the generating
function counting a given part according to absolute even-count is in
a family of $(x+1)$-positive building-block polynomials. We begin by
constructing these building blocks.




\begin{defn}
The polynomial $\bin_k(n) \in \mathbb{Z}[x]$ is defined by $$\bin_k(n)
= \sum_{i \ge 0} x^i {n \choose i+k}.$$
\end{defn}

It is not hard to show that $\bin_k(n)$ is $(x+1)$-positive.
\begin{lem}\label{lemF}
Let $n$ and $k$ be positive integers. Then,
$$\bin_k(n) = \sum_{i \ge 0} {n-i-1 \choose k-1}(x+1)^{i}.$$
\end{lem}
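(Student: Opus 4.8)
The plan is to prove the identity by comparing coefficients of powers of $x$ on the two sides. Writing $(x+1)^i = \sum_{j \ge 0}\binom{i}{j}x^j$ by the binomial theorem and interchanging the (finite) sums, the proposed right-hand side becomes
$$\sum_{i \ge 0}\binom{n-i-1}{k-1}(x+1)^i = \sum_{j \ge 0}x^j\left(\sum_{i \ge 0}\binom{i}{j}\binom{n-i-1}{k-1}\right).$$
Since the coefficient of $x^j$ in $\bin_k(n)$ is $\binom{n}{j+k}$, this reduces the lemma to the single numerical identity
$$\sum_{i \ge 0}\binom{i}{j}\binom{n-1-i}{k-1} = \binom{n}{j+k}, \qquad j \ge 0.$$

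I would then establish this identity as an upper-index convolution. Using the generating functions $\sum_{i \ge 0}\binom{i}{j}t^i = t^j/(1-t)^{j+1}$ and $\sum_{s \ge 0}\binom{s}{k-1}t^s = t^{k-1}/(1-t)^{k}$, their product is $t^{j+k-1}/(1-t)^{j+k+1}$, whose coefficient of $t^{n-1}$ equals on one hand the convolution $\sum_i \binom{i}{j}\binom{n-1-i}{k-1}$ and on the other hand $\binom{n}{j+k}$. Equivalently, one may simply invoke the standard identity $\sum_{i=0}^{m}\binom{i}{a}\binom{m-i}{b} = \binom{m+1}{a+b+1}$ with $m = n-1$, $a = j$, $b = k-1$. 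This completes the argument.

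An alternative route is by induction. Pascal's rule gives $\binom{n}{i+k} = \binom{n-1}{i+k} + \binom{n-1}{i+k-1}$, hence the recurrence $\bin_k(n) = \bin_k(n-1) + \bin_{k-1}(n-1)$; and the claimed expression $G_k(n) := \sum_{i \ge 0}\binom{n-1-i}{k-1}(x+1)^i$ satisfies the very same recurrence $G_k(n) = G_k(n-1) + G_{k-1}(n-1)$, again by Pascal's rule applied to $\binom{n-1-i}{k-1}$. The base case $k = 1$ follows from the geometric series, since $\bin_1(n) = ((1+x)^n - 1)/x = \sum_{i=0}^{n-1}(x+1)^i = G_1(n)$; inducting on $n$ then finishes the proof.

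I do not expect a genuine obstacle here: both approaches are short, and the only point requiring care is the bookkeeping of binomial conventions. One should adopt $\binom{a}{b} = 0$ for $0 \le a < b$ so that every sum is honestly finite, and check that when $n < k$ both sides vanish identically, making the two sides legitimate equal polynomials in $x$ (equivalently in $x+1$) rather than merely formally equal series.
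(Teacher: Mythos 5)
Your proof is correct, but it takes a genuinely different route from the paper's. You verify the identity coefficient-by-coefficient: extracting the coefficient of $x^j$ reduces the lemma to the convolution $\sum_{i}\binom{i}{j}\binom{n-1-i}{k-1}=\binom{n}{j+k}$, which you settle by generating functions (or, in your alternative, by noting that both sides satisfy the Pascal recurrence $\bin_k(n)=\bin_k(n-1)+\bin_{k-1}(n-1)$ and inducting from the geometric-series case $k=1$). The paper instead gives a one-stroke combinatorial argument: it reads $\bin_k(n)$ as the generating function counting subsets $S\subseteq\{1,\dots,n\}$ with $|S|\ge k$ by weight $x^{|S|-k}$, conditions on the value $j$ of the $k$-th element of $S$, and notes that the elements of $S$ on one side of $j$ give $\binom{j-1}{k-1}$ choices while each of the remaining $n-j$ positions is freely in or out of $S$, contributing a factor of $(x+1)$ apiece; summing over $j$ gives the formula. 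The two arguments are close kin --- your convolution identity is precisely the coefficient-of-$x^j$ shadow of that conditioning --- but the paper's version buys something it reuses: a template (partition the weighted objects so that each block visibly contributes a power of $(x+1)$) that reappears in the toggle-equivalence argument proving Theorem \ref{thmpos}, and the paper explicitly remarks that it prefers such a proof over a routine algebraic verification (it mentions Zeilberger's creative telescoping) for exactly this reason; your route, by contrast, buys brevity and requires no bijective insight. One bookkeeping remark: your stated convention $\binom{a}{b}=0$ for $0\le a<b$ should be strengthened to $\binom{a}{b}=0$ whenever $a<b$, negative $a$ included --- otherwise, e.g., $G_1(n)=\sum_{i\ge 0}\binom{n-1-i}{0}(x+1)^i$ does not truncate at $i=n-1$; with the full combinatorial convention everything you wrote is sound.
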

\begin{proof}
  Observe that $\bin_k(n)$ is the generating function counting subsets
  $S \subseteq \{1, \ldots, n\}$ of size at least $k$ according to $|S| - k$.

  Consider only sets $S \subseteq \{1, \ldots, n\}$ such that the $k$-th largest element of $S$
  is $j$. Then there are $j-1 \choose k-1$ options for the $k$
  smallest elements of $S$. Moreover, since we can choose whether each
  of $\{j+1, j+2, \ldots, n\}$ is in $S$, the total contribution of
  such paths to $\bin_k(n)$ is $${j-1 \choose k-1}(x+1)^{n-j}.$$

Summing over values for $j$, we get
$$\bin_k(n) = \sum_{j = 1}^n {j-1 \choose k-1}(x+1)^{n-j} = \sum_{i \ge 0}{n-i-1 \choose k-1}(x+1)^i.$$
\end{proof}

Note that Lemma \ref{lemF} could also have easily been proven by
routinely using Zeilberger's creative telescoping
\cite{zeilberger91}. However, we prefer our combinatorial proof as it
highlights one of the types of arguments that may prove useful to the
reader when studying $(x + 1)$-positivity.

We will now use $\bin_k(n)$ to build a more interesting building block:
\begin{defn}
Let $\bin^k(n)$ be the generating function counting subsets $S \subseteq
[n]$ according to $||S| - k|$.
\end{defn}

The next result, Proposition \ref{propG}, establishes that $\bin^k(n)$
is $(x+1)$-positive. Interestingly, the expansion of $\bin^k(n)$ in
powers of $(x + 1)$ has constant coefficient 0. Like Lemma \ref{lemF},
Proposition \ref{propG} could also be shown with Zeilberger's creative
telescoping. We find it more straightforward, however, to simply
interpret $\bin^k(n)$ in terms of its cousin $\bin_{k'}(n')$.
\begin{prop}\label{propG}
Let $n$ and $k$ be positive integers and $k \le n$. Then,
$$\bin^k(n) = \sum_{i>0} \left({n-i-1 \choose k-1} + {n-i-1 \choose
    n-k-1}\right)(x+1)^{i}.$$
\end{prop}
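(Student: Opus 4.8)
The plan is to express $\bin^k(n)$ as a combination of the already-understood polynomials $\bin_{k'}(n')$, and then apply Lemma~\ref{lemF} to each piece. The key observation is the decomposition of the statistic $||S|-k|$ into two regimes according to the sign of $|S|-k$. For subsets $S \subseteq [n]$ with $|S| \ge k$, we have $||S|-k| = |S|-k$, and these contribute exactly $\bin_k(n)$ to the generating function. For subsets with $|S| < k$, we have $||S|-k| = k-|S|$, so these contribute $\sum_{i \ge 1} x^i \binom{n}{k-i}$. Thus I would first establish the identity
\[
\bin^k(n) = \bin_k(n) + \sum_{i \ge 1} x^i \binom{n}{k-i}.
\]

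The second step is to recognize the ``deficient'' sum $\sum_{i \ge 1} x^i \binom{n}{k-i}$ as itself an instance of $\bin_{k'}(n')$ for a suitable shift of parameters. Reindexing via the symmetry $\binom{n}{k-i} = \binom{n}{n-k+i}$ turns the sum into $\sum_{i \ge 1} x^i \binom{n}{(n-k)+i}$, which is precisely $\bin_{n-k+1}(n)$ (after matching the summation index $i \ge 1$ to the $i \ge 0$ convention in the definition, noting the $i=0$ term $\binom{n}{n-k} = \binom{n}{k}$ is exactly the top term absorbed into $\bin_k(n)$ and must be handled carefully). I expect the bookkeeping here to be the main obstacle: one must check that the overlap term (the $i=0$ contribution counting sets of size exactly $k$, which contributes $x^0 = 1$ once) is counted exactly once across the two regimes, so that the resulting expansion has constant coefficient $0$ as claimed in the statement. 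Getting this off-by-one alignment right is where an error is most likely to creep in.

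The final step is purely mechanical: apply Lemma~\ref{lemF} to both $\bin_k(n)$ and to the reindexed deficient sum $\bin_{n-k+1}(n)$, obtaining
\[
\bin_k(n) = \sum_{i \ge 0}\binom{n-i-1}{k-1}(x+1)^i, \qquad \bin_{n-k+1}(n) = \sum_{i \ge 0}\binom{n-i-1}{n-k}(x+1)^i.
\]
Summing these and noting that $\binom{n-i-1}{n-k} = \binom{n-i-1}{(n-i-1)-(n-k)} = \binom{n-i-1}{n-k-1-(i-1)}$ — or more simply matching the second binomial to $\binom{n-i-1}{n-k-1}$ via the correct index shift — yields the desired formula. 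The claim that the sum runs over $i > 0$ rather than $i \ge 0$ reflects the cancellation of the constant term, which is the combinatorial statement that $\bin^k(n)$ evaluated at $x=-1$ vanishes; I would verify this vanishing as a consistency check, since it is equivalent to saying the number of subsets with $||S|-k|$ even equals the number with $||S|-k|$ odd.
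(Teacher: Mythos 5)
Your overall plan --- split the subsets by the sign of $|S|-k$, identify each regime with a polynomial of the form $\bin_{k'}(n)$, and finish with Lemma~\ref{lemF} --- is exactly the paper's strategy, but the off-by-one bookkeeping that you yourself flagged as the danger point is in fact wrong in your write-up, and the proof fails as stated. The deficient sum is not $\bin_{n-k+1}(n)$. After the symmetry $\binom{n}{k-i}=\binom{n}{(n-k)+i}$, reindexing $\sum_{i\ge 1}x^{i}\binom{n}{(n-k)+i}$ by $j=i-1$ gives $\sum_{j\ge 0}x^{j+1}\binom{n}{j+(n-k+1)}=x\,\bin_{n-k+1}(n)$: removing the constant term of $\bin_{n-k}(n)$ shifts the binomial index but not the exponent of $x$, so you have dropped a factor of $x$. (A quick sanity check at $x=0$: your deficient sum vanishes there, while $\bin_{n-k+1}(n)$ evaluates to $\binom{n}{n-k+1}=\binom{n}{k-1}\neq 0$.) The correct identification is
$$\sum_{i\ge 1}x^{i}\binom{n}{k-i} \;=\; \bin_{n-k}(n)-\binom{n}{k},
\qquad\text{i.e.}\qquad
\bin^k(n)=\bin_k(n)+\bin_{n-k}(n)-\binom{n}{k},$$
which is precisely the identity the paper derives (the paper obtains the $\bin_{n-k}(n)$ piece by complementing subsets rather than reflecting binomial coefficients). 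With your version the final assembly cannot produce the stated formula: Lemma~\ref{lemF} applied to $\bin_k(n)+\bin_{n-k+1}(n)$ yields $\sum_{i\ge 0}\left(\binom{n-i-1}{k-1}+\binom{n-i-1}{n-k}\right)(x+1)^{i}$, whose second entry $\binom{n-i-1}{n-k}$ is a genuinely different polynomial coefficient from the needed $\binom{n-i-1}{n-k-1}$ (both multiply the same power $(x+1)^i$, so no ``index shift'' can exchange them), and whose constant term is $\binom{n-1}{k-1}+\binom{n-1}{n-k}=2\binom{n-1}{k-1}\neq 0$ --- contradicting the vanishing at $x=-1$ that you correctly identified as a consistency check. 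Your displayed manipulation $\binom{n-i-1}{(n-i-1)-(n-k)}=\binom{n-i-1}{n-k-1-(i-1)}$ is also false: the left-hand side equals $\binom{n-i-1}{k-i-1}$.

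The repair is short and lands exactly on the paper's proof. From $\bin^k(n)=\bin_k(n)+\bin_{n-k}(n)-\binom{n}{k}$, apply Lemma~\ref{lemF} to both terms (this needs $n-k\ge 1$, i.e.\ $k<n$, for the second application) to get
$$\bin^k(n)=\sum_{i\ge 0}\left(\binom{n-i-1}{k-1}+\binom{n-i-1}{n-k-1}\right)(x+1)^{i}-\binom{n}{k},$$
and the constant term $\binom{n-1}{k-1}+\binom{n-1}{n-k-1}-\binom{n}{k}$ vanishes by Pascal's rule, since $\binom{n-1}{n-k-1}=\binom{n-1}{k}$. Alternatively, you may keep your (corrected) piece $x\,\bin_{n-k+1}(n)$, substitute $x=(x+1)-1$, and collapse the resulting difference of sums with Pascal's identity; either route is what actually produces the $\binom{n-i-1}{n-k-1}$ and kills the constant term.
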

\begin{proof}
Recall that $\bin_k(n)$ counts subsets $S \subseteq [n]$ of size at least $k$
according to $|S| - k$. On the other hand, $\bin_{n-k}(n)$ counts subsets
$S \subseteq [n]$ of size at least $n-k$ according to $|S| - n +
k$. Equivalently, considering the complement sets, $\bin_{n-k}(n)$ counts
subsets of size at most $k$ according to $(n-|S|) - n + k = k - |S|$. Thus
$$\bin_k(n) + \bin_{n-k}(n) = \bin^k(n) + {n \choose k},$$ since we are
double-counting subsets of size $k$.

It follows from Lemma \ref{lemF} that for $k > 0$, 
$$\bin^k(n) =  \sum_{i \ge 0} {n-i-1 \choose k-1}(x+1)^{i} +  \sum_{i \ge 0} {n-i-1 \choose n-k-1}(x+1)^{i} - {n \choose k}$$
$$= \sum_{i>0} \left({n-i-1 \choose k-1} + {n-i-1 \choose
  n-k-1}\right)(x+1)^{i} + {n-1 \choose k-1} + {n-1 \choose
  n-k-1} - {n \choose k}.$$
Reducing the final three terms to zero, we get the desired formula.
\end{proof}

Having designed $(x + 1)$-positive building-block polynomials into which we will partition $F(m, m, n, n)$, we next introduce the idea of absolute even-count.

\begin{defn}
Let $W^m_n$ be the set of binary words with $2m$ zeroes and $2n$ ones.
\end{defn}

\begin{defn}
The \emph{even-count} of a binary word $w$ is the number of ones in
even-indexed positions. The \emph{shifted even-count} is
$$(\#\text{ ones in even positions}) - \frac{1}{2}(\#\text{ ones}).$$
The \emph{absolute even-count} is the absolute value of the shifted
even-count.
\end{defn}

\begin{ex}
The word $1010110000$ has absolute even-count $|1 - 2|=1$.
\end{ex}

Absolute even-count's usefulness stems from its connection to signed
peak-count, which is described in the following lemma.
\begin{lem}\label{lemevencount}
If $\hpath$ is a horizontal loop with $m$ steps right and left, and
$\vpath$ is a vertical loop with $n$ steps up and down, then number of
shuffles of $\vpath$ and $\hpath$ with signed peak-count of absolute
value $k$ is the same as the number of elements of $W^m_n$ with
absolute even-count $k$.

Consequently, $F(m, m, n, n)$ counts elements of $W^m_n$ by absolute
even-count.
\end{lem}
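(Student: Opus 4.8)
The plan is to compute both quantities explicitly and match them term by term for each signed value of the statistic, then pass to absolute values. On the shuffle side the work is already done by Theorem \ref{thmmain}: specializing $r = l = m$ and $u = d = n$, the number of $\sigma \in \shuffles$ with signed peak-count exactly $k$ is
$${m+n \choose n-k}{m+n \choose n+k}.$$

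On the word side I would count elements of $W^m_n$ by shifted even-count directly. Since $W^m_n$ consists of binary words of length $2m + 2n = 2(m+n)$, there are exactly $m+n$ even positions and $m+n$ odd positions. A word with exactly $j$ ones in even positions is obtained by choosing which of the $m+n$ even positions carry ones and which of the $m+n$ odd positions carry the remaining $2n - j$ ones, giving ${m+n \choose j}{m+n \choose 2n-j}$ such words. Writing $j = n+k$, the shifted even-count of such a word is $j - n = k$, and the count becomes ${m+n \choose n+k}{m+n \choose n-k}$, which is identical to the shuffle count above. Thus the signed statistics are equidistributed term by term.

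Having matched the two signed statistics for every integer $k$, the absolute-value version follows immediately: for each $k > 0$ the words (resp. shuffles) of absolute statistic $k$ are exactly those of signed statistic $+k$ together with those of signed statistic $-k$, while the $k = 0$ case is the single shared term, so the two absolute distributions coincide. The \emph{Consequently} clause is then automatic, since $F(m,m,n,n)$ is by definition $\sum_{\sigma \in \shuffles} x^{|\text{signed peak-count}(\sigma)|}$, which by the equinumerosity just established equals $\sum_{w \in W^m_n} x^{\text{absolute even-count}(w)}$.

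The argument is short, and the step I would check most carefully is the position bookkeeping: confirming that a length-$2(m+n)$ word has precisely $m+n$ positions of each parity, so that both binomial factors share the upper index $m+n$ and line up with the shuffle formula, and confirming that the indexing convention for ``even position'' is consistent with the worked example $1010110000$, whose absolute even-count is recorded as $|1-2| = 1$. Once the parity split is verified to be exactly even, no further computation is required, so I do not expect a genuine obstacle here beyond this verification.
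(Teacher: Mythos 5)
Your proof is correct and follows essentially the same route as the paper: both invoke Theorem \ref{thmmain} with $r=l=m$, $u=d=n$ for the shuffle side, and count words in $W^m_n$ by choosing which of the $m+n$ even positions and which of the $m+n$ odd positions carry ones. If anything, your term-by-term matching of the \emph{signed} distributions before passing to absolute values is slightly tidier than the paper's direct statement that the absolute-value-$k$ count is $2\binom{m+n}{n-k}\binom{m+n}{n+k}$, a formula which, read literally, double-counts the $k=0$ case that your argument handles explicitly.
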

\begin{proof}
  Theorem \ref{thmmain} tells us that the number of shuffles of
  $\vpath$ and $\hpath$ with signed peak-count of absolute value $k$
  is
  $$2{m+n \choose n-k}{m+n \choose n+k}.$$

  On the other hand, in order for an element of $W^m_n$ to have $n +
  k$ ones in even positions, we get to choose $n + k$ of the $m + n$
  even positions to take value one, and then $n - k$ of the $m + n$
  odd positions to take value one. Similarly, in order for an element
  of $W^m_n$ to have $n - k$ ones in even positions, we get to choose
  $n - k$ of the even positions to be ones, and $n + k$ of the odd
  positions to be ones. It follows that the number of elements in
  $W^m_n$ with absolute even-count $k$ is also
  $$2{m+n \choose n-k}{m+n \choose n+k}.$$
  
\end{proof}

It will be useful for us to think about absolute even-count using as
simple reformulation involving the notion of an odd-indexed pair.

\begin{defn}
An \emph{odd-indexed pair} in a word $w$ is any pair of letters $(w(2i+1), w(2i+2))$.
\end{defn}

\begin{ex}
The binary word $10011100$ partitions into odd-indexed pairs as $10|01|11|00$.
\end{ex}

\begin{lem}\label{lemevencount2}
For a given $w \in W^m_n$, the absolute even-count of $w$ is
$$|(\# \ (0, 1) \text{ odd-indexed pairs in }w) - \frac{1}{2}(\#  \ (a, b) \text{
  odd-indexed pairs in }w \text{ with }a \neq b)|.$$
\end{lem}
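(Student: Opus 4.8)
The plan is to reconcile the original definition of absolute even-count with the odd-indexed-pair reformulation by tracking how each pair contributes to the two quantities appearing in the shifted even-count, namely the number of ones in even positions and half the total number of ones. Because the word $w$ has length $2m+2n$, it partitions cleanly into $m+n$ odd-indexed pairs $(w(2i+1), w(2i+2))$, and every letter of $w$ lies in exactly one such pair. Within a pair $(w(2i+1), w(2i+2))$, the first coordinate $w(2i+1)$ sits in an odd position and the second coordinate $w(2i+2)$ sits in an even position (using $1$-indexing consistent with the definitions above). So the plan is to classify each pair by its type — $(0,0)$, $(0,1)$, $(1,0)$, or $(1,1)$ — and compute its separate contribution to (a) the count of ones in even positions and (b) the total count of ones.

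First I would tabulate these contributions. A $(0,1)$ pair contributes $1$ to the even-position ones and $1$ to the total ones; a $(1,0)$ pair contributes $0$ to even-position ones and $1$ to the total; a $(1,1)$ pair contributes $1$ and $2$ respectively; and a $(0,0)$ pair contributes nothing. Writing $a,b,c,e$ for the number of pairs of each respective type, the even-position count is $a+e$ and the total ones count is $a+b+2e$. The shifted even-count is therefore $(a+e) - \frac{1}{2}(a+b+2e)$, which simplifies to $\frac{1}{2}(a-b)$. Next I would observe that the $(0,1)$ pairs are counted by $a$, and the pairs with distinct coordinates $a \neq b$ are exactly the $(0,1)$ and $(1,0)$ pairs, numbering $a+b$. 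Hence $a - \frac{1}{2}(a+b) = \frac{1}{2}(a-b)$, which is precisely the shifted even-count. Taking absolute values of both sides then gives the claimed formula.

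The only mild subtlety — and the step I would watch most carefully — is the indexing convention, since the definition of absolute even-count speaks of ones in \emph{even} positions while an odd-indexed pair starts at position $2i+1$; I would make sure that the second coordinate of each pair really is the even-indexed letter, so that the $a+e$ bookkeeping above is correct. Once that alignment is fixed the rest is an exact arithmetic identity with no room for error, so I do not expect any genuine obstacle beyond presenting the four-case table cleanly. I would conclude by noting that the computation holds for every $w \in W^m_n$ regardless of $m$ and $n$, since the argument is purely local to the pair decomposition and never uses the exact counts of zeroes and ones.
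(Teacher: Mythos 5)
Your proof is correct and takes essentially the same approach as the paper: both decompose $w$ into its $m+n$ odd-indexed pairs and compute each pair's local contribution to the shifted even-count (the paper records the contribution directly as $0$, $+\frac{1}{2}$, or $-\frac{1}{2}$, while you track the two constituent counts separately before combining -- an immaterial difference). The only blemish is notational: your letters $a,b,c,e$ are assigned to pair types in a confusing order and then $a,b$ are reused for the letters of a generic pair, but the arithmetic $(a+e)-\frac{1}{2}(a+b+2e)=\frac{1}{2}(a-b)$ and the final rewriting are exactly right.
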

\begin{proof}
  For a given $w \in W^m_n$, the contribution of a given odd-indexed
  pair $(a, b)$ to the shifted even-count is 0 if $a = b$;
  $-\frac{1}{2}$ if $a =1$ and $b = 0$; and $\frac{1}{2}$ if $b = 1$
  and $a = 0$. Therefore, the shifted even-count of $w$ is

  $$\frac{1}{2}(\# \ (0, 1) \text{ odd-indexed pairs in }w) -
  \frac{1}{2}(\# \ (1, 0) \text{ odd-indexed pairs in }w),$$
  which can be rewritten as
  $$(\# \ (0, 1) \text{ odd-indexed pairs in }w) - \frac{1}{2}(\#  \ (a, b) \text{
  odd-indexed pairs in }w \text{ with }a \neq b).$$
\end{proof}

We are now prepared to present the main result of the section. Theorem
\ref{thmpos} describes $F(m, m, n, n)$ as a polynomial in
$(x + 1)$.
\begin{thm}\label{thmpos}
  The polynomial $F(m, m, n, n)$ is $(x + 1)$-positive. More precisely, $F(m, m, n, n)$ equals
  $${m+n \choose n} + 2  \sum_{i>0} (x+1)^i \sum_{0 \le k < n} {m+n \choose k}{m+n-k \choose 2n-2k}{2n-2k-i-1 \choose n-k-1}.$$
\end{thm}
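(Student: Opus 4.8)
The plan is to combine Lemma~\ref{lemevencount}, which tells us that $F(m,m,n,n)$ enumerates $W^m_n$ by absolute even-count, with the odd-indexed-pair reformulation of Lemma~\ref{lemevencount2} and the building-block polynomial $\bin^k(n)$ from Proposition~\ref{propG}. The key observation is that absolute even-count is governed entirely by the ``mixed'' odd-indexed pairs (those with $a \ne b$): pairs of the form $(0,0)$ or $(1,1)$ contribute nothing, while each mixed pair contributes $\pm\frac{1}{2}$ depending on orientation. So I would first stratify $W^m_n$ according to the \emph{positions} of the mixed pairs, and, having fixed which pairs are mixed, observe that the absolute even-count depends only on how many of those mixed pairs are oriented as $(0,1)$ versus $(1,0)$ --- precisely the kind of ``size of a subset, measured by distance from a center'' statistic that $\bin^k$ is built to count.

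The concrete partition I would use is to classify a word $w \in W^m_n$ by the multiset of its odd-indexed pairs: say $k$ pairs are $(0,0)$, some number are $(1,1)$, and the remaining $2n-2k$ positions (I will track parameters so the binomials come out right) are mixed. Once the pattern of which pairs are $(0,0)$, which are $(1,1)$, and which are mixed is fixed, the only remaining freedom is choosing the orientation of each mixed pair. If there are, say, $2t$ mixed pairs, then choosing a subset $S$ of them to be $(0,1)$ gives shifted even-count $|S| - \frac{1}{2}(2t) = |S| - t$, so the absolute even-count is $\big||S| - t\big|$. Summing $x^{\text{abs.\ even-count}}$ over all orientations of the mixed pairs is therefore exactly $\bin^t(2t)$ in the notation of Proposition~\ref{propG}. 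The outer sum then counts the ways to select which pairs are $(0,0)$, $(1,1)$, or mixed, subject to the global constraint of $2m$ zeroes and $2n$ ones, producing the multinomial-type coefficients $\binom{m+n}{k}\binom{m+n-k}{2n-2k}$ that appear in the target formula.

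With this decomposition in hand, the proof reduces to assembling pieces: I would write $F(m,m,n,n) = \sum \binom{m+n}{k}\binom{m+n-k}{2n-2k}\,\bin^{\,n-k}(2n-2k)$ where the sum ranges over the valid values of $k$, then substitute the explicit $(x+1)$-expansion of $\bin^{\,n-k}(2n-2k)$ from Proposition~\ref{propG}. Since that expansion has zero constant term and positive $(x+1)$-coefficients, $(x+1)$-positivity of each summand is immediate, and hence so is positivity of the whole sum. The constant term $\binom{m+n}{n}$ must be recovered separately: it corresponds to words with \emph{no} mixed pairs (absolute even-count $0$), which are exactly those in which every odd-indexed pair is constant, and a short count shows there are $\binom{m+n}{n}$ of these. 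Matching the two binomials $\binom{2n-2k-i-1}{n-k-1}$ against the factors in Proposition~\ref{propG} requires checking that the symmetric term $\binom{n-i-1}{n-k-1}$ there collapses by the symmetry $k \leftrightarrow n-k$ of $\bin^k(n)$ evaluated at $n = 2n-2k$, so that both contributions of Proposition~\ref{propG} merge into the single displayed binomial coefficient doubled.

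The main obstacle I anticipate is bookkeeping rather than conceptual: I must get the index ranges and the identification of the $\bin^k$ parameters exactly right so that the central-binomial structure of Proposition~\ref{propG} (where the argument and the subscript satisfy $n = 2k$) matches the block of mixed pairs, and so that the doubling factor of $2$ and the two nearly-symmetric binomials in the statement of Theorem~\ref{thmpos} emerge correctly. In particular I expect to have to verify carefully that when $\bin^k(n)$ is specialized to a block of $2t$ mixed pairs the relevant instance is the balanced one $\bin^t(2t)$, for which the two binomials $\binom{2t-i-1}{t-1}$ in Proposition~\ref{propG} coincide and combine into the factor of $2$ seen in the theorem --- this is where a sign or off-by-one error would most easily creep in, so I would confirm it against the $F(1,1,1,1)$ or $F(1,1,2,2)$ case before trusting the general formula.
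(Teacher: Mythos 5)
Your proposal is correct and is essentially the paper's own proof: the paper's partition into toggle-equivalence classes (characterized by the anchor of $(1,1)$ pairs and the base of mixed pairs) is exactly your stratification by pair type, and both arguments then write $F(m,m,n,n) = \sum_k \binom{m+n}{k}\binom{m+n-k}{2n-2k}\bin^{n-k}(2n-2k)$, invoke Proposition~\ref{propG} in the balanced case where the two binomials coincide and merge into the factor of $2$, and recover the constant term $\binom{m+n}{n}$ from the words with no mixed pairs. One small correction to your prose: the index $k$ in that formula must count the $(1,1)$ pairs rather than the $(0,0)$ pairs (as you say at one point), since the count of ones forces $2k + (2n-2k) = 2n$; your displayed formula is consistent with this, so the slip is only verbal.
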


\begin{proof}
Lemma \ref{lemevencount} allows us to interpret $F(m, m, n, n)$ as
counting elements of $W^m_n$ according to their absolute
even-count. We will partition the elements of $W^m_n$ so that each
part has absolute even-count enumerated by $\bin^k(2k)$ for some $k$,
allowing us to apply Proposition \ref{propG} in order to prove the
$(x+1)$-positivity of $F(m, m, n, n)$.

For $0 \le i < m+n$, define \emph{$i$-toggling} of a word $w \in W^m_n$
as swapping the values in the $(i + 1)$-th odd-indexed pair. Say that two
words are \emph{toggle-equivalent} if they can be reached from each
other by repeated toggling (for various $i$). Denote the
toggle-equivalence class of $w \in W^m_n$ by $T(w)$. For example, $T(110110)$ is
$$\{11|01|10,\phantom{\}}$$
$$\phantom{\{}11|10|10,\phantom{\}}$$
$$\phantom{\{}11|01|01,\phantom{\}}$$
$$\phantom{\{}11|10|01\},$$
where we use $|$ to separate odd-indexed pairs.

Call the set of $i$ such that $w(2i+1)=1$ and $w(2i+2)=1$ the
\emph{anchor} $A(w)$, and the set of $i$ such that $w(2i) \neq
w(2i+1)$ the \emph{base} $B(w)$. Then we can characterize $T(w)$ in
terms of $w$'s anchor and base, observing that
\begin{equation}\label{eqT}
T(w) = \{w' : w' \in W^m_n, \ A(w') = A(w), \ B(w') = B(w)\}.
\end{equation}

Equation \eqref{eqT} tells us that we can think of each element $w'$
of $T(w)$ as being represented by the set $S_{w'} = \{i \mid w'(2i +
1) = 0, w'(2i + 2) =1\}$, with each of the subsets $S \subseteq B(w)$
representing exactly one element in $T(w)$.  Moreover, by Lemma
\ref{lemevencount2}, the absolute even-count of each element $w'$ is
then given by $||S_{w'}| - |B(w)| / 2|$. It follows that the elements
of $T(w)$ are counted by $\bin^{|B(w)|/ 2}(|B(w)|)$ with respect to
absolute even-count.

At this point, we have shown that $F(m, m, n, n)$ can be expressed as
the sum of polynomials of the form $\bin^{j}(2j)$, each of which we
know to be $(x + 1)$-positive due to Proposition
\ref{propG}\footnote{Proposition \ref{propG} technically doesn't cover
  $\bin^0(0) = 1$, which is obviously also $(x +
  1)$-positive.}. Having established the $(x + 1)$-positivity of $F(m,
m, n, n)$, it remains to derive a formula for it as a polynomial in
$(x + 1)$.

There are ${m+n \choose k}{m+n-k \choose 2n-2k}$ distinct
toggle-equivalence classes containing words with anchors of size
$k$. In particular, there are $m+n \choose k$ options for $A(w)$, for
each of which there are $m+n-k \choose 2n-2k$ options for
$B(w)$. Therefore the distribution of absolute even-count over
$W^m_n$ is given by the polynomial
\begin{equation}\label{eqtobin}
\sum_{0 \le k \le n} {m+n \choose k}{m+n-k \choose 2n-2k}\bin^{n-k}(2n-2k).
\end{equation}
By Proposition \ref{propG}, when $k \neq n$,
$\bin^{n-k}(2n-2k)$ equals
$$\sum_{i>0} \left({(2n-2k)-i-1 \choose (n-k)-1} + {(2n-2k)-i-1 \choose
  (2n-2k)-(n-k)-1}\right)(x+1)^{i}$$
\begin{equation}\label{eqbinsimp}
  = 2\sum_{i>0} {2n-2k-i-1 \choose n-k-1}(x+1)^{i}.
\end{equation}
When $k = n$, $\bin^{n-k}(2n-2k) =1$. Plugging this along with Equation
\eqref{eqbinsimp} into Equation \eqref{eqtobin}, the distribution of
absolute even-count over $W^m_n$ is
$${m+n \choose n}{m+n-n \choose 0}$$ $$+ 2 \sum_{0 \le k < n} {m+n \choose k}{m+n-k \choose 2n-2k} \sum_{i>0} {2n-2k-i-1 \choose n-k-1}(x+1)^{i}$$
$$= {m+n \choose n} + 2  \sum_{i>0} (x+1)^i \sum_{0 \le k < n} {m+n \choose k}{m+n-k \choose 2n-2k}{2n-2k-i-1 \choose n-k-1}.$$
\end{proof}

It turns out that Theorem \ref{thmmain} extends to proving the $(x +
1)$-positivity of $F(m, n, m, n)$ at no additional cost. Indeed,
Proposition \ref{propscale} shows that the $(x+1)$-positivity of $F(r,
l, u, d)$ is always symmetric in $l$ and $u$. In addition, since $F(r,
l, u, d) = F(u, d, r, l)$, it follows that $(x+1)$-positivity is also
symmetric in $d$ and $r$.
\begin{prop}\label{propscale}
  The polynomial $F(r, l, u, d)$ is $(x+1)$-positive if and only if $F(r, u, l, d)$ is $(x+1)$-positive. More precisely,
  $$F(r, l, u, d) \cdot (r + l)! (u + d)! = F(r, u, l, d) \cdot (r + u)!(l + d)!.$$
\end{prop}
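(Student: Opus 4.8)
The plan is to prove the quantitative identity first and then deduce the positivity equivalence as an immediate consequence. By Theorem \ref{thmmain}, the number of shuffles counted by $F(r,l,u,d)$ with signed peak-count exactly $k$ is $A_k := {r+u \choose u-k}{l+d \choose d+k}$, so that $F(r,l,u,d) = \sum_k x^{|k|}A_k$. Applying the same theorem after interchanging the roles of $l$ and $u$ shows that the number of shuffles counted by $F(r,u,l,d)$ with signed peak-count $k$ is $B_k := {r+l \choose l-k}{u+d \choose d+k}$, so that $F(r,u,l,d) = \sum_k x^{|k|}B_k$. One checks that $A_k$ and $B_k$ are nonzero for exactly the same range $\max(-r,-d) \le k \le \min(u,l)$, so the two sums are supported on the same set of exponents.

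The heart of the argument is the term-by-term identity $A_k\,(r+l)!\,(u+d)! = B_k\,(r+u)!\,(l+d)!$, valid for every $k$. I would establish it by expanding each binomial coefficient into factorials. Using ${r+u \choose u-k} = \frac{(r+u)!}{(u-k)!(r+k)!}$ and ${l+d \choose d+k} = \frac{(l+d)!}{(d+k)!(l-k)!}$, the left-hand side becomes
\[
\frac{(r+u)!\,(l+d)!\,(r+l)!\,(u+d)!}{(u-k)!\,(r+k)!\,(d+k)!\,(l-k)!}.
\]
The analogous expansion $B_k = \frac{(r+l)!\,(u+d)!}{(l-k)!\,(r+k)!\,(d+k)!\,(u-k)!}$ shows that the right-hand side is the very same symmetric expression (with all factorials understood to have non-negative arguments, both sides vanishing together otherwise). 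Multiplying by $x^{|k|}$ and summing over $k$ then gives exactly $F(r,l,u,d)\,(r+l)!\,(u+d)! = F(r,u,l,d)\,(r+u)!\,(l+d)!$, which is the stated formula.

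For the positivity claim I would observe that the displayed identity exhibits $F(r,l,u,d)$ and $F(r,u,l,d)$ as positive rational multiples of one another, with ratio $c = \frac{(r+u)!\,(l+d)!}{(r+l)!\,(u+d)!} > 0$. Both polynomials have integer coefficients in $x$, and hence integer coefficients when re-expanded in the basis $\{(x+1)^i\}$, since the substitution $x \mapsto (x+1)-1$ is an integral, unimodular change of basis. Thus if $F(r,u,l,d) = \sum_i b_i (x+1)^i$ with $b_i \in \mathbb{Z}$, then $F(r,l,u,d) = \sum_i c\,b_i (x+1)^i$ with $c\,b_i \in \mathbb{Z}$; because $c>0$, we have $c\,b_i \ge 0$ for all $i$ if and only if $b_i \ge 0$ for all $i$. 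Hence one polynomial lies in $\NN[x+1]$ precisely when the other does.

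I do not expect a serious obstacle. The factorial bookkeeping in the central identity is routine, and the only point that genuinely requires care is the final step: one must note that \emph{both} generating functions already have integer $x$-coefficients, so that multiplication by the positive rational $c$ cannot destroy integrality in the $(x+1)$-basis and therefore preserves, rather than merely rescales, $(x+1)$-positivity.
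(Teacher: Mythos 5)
Your proposal is correct and follows essentially the same route as the paper: both apply Theorem \ref{thmmain} to express the two polynomials coefficient-by-coefficient and then verify the factorial identity ${r+u \choose u-k}{l+d \choose d+k}(r+l)!(u+d)! = {r+l \choose l-k}{u+d \choose d+k}(r+u)!(l+d)!$, the paper phrasing this as pulling out the factor $\frac{(r+u)!(l+d)!}{(r+l)!(u+d)!}$. Your explicit final step --- noting that both polynomials have integer coefficients in the $(x+1)$-basis, so multiplication by the positive rational $c$ preserves non-negativity and integrality simultaneously --- is a point the paper leaves implicit, and is a worthwhile clarification rather than a departure.
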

\begin{proof}
  Observe that $$F(r, l, u, d) = \sum_i x^{|i|} {r + u \choose u + i}{l + d \choose d - i},$$
  by Theorem \ref{thmmain}. Pulling out a factor of $\frac{(r+u)!(l+d)!}{(r+l)!(u+d)!}$, we obtain
  $$F (r, l, u, d) = \frac{(r+u)!(l+d)!}{(r+l)!(u+d)!} \sum_i x^{|i|} {r+l \choose
    l+i}{u+d \choose d - i}$$
  $$= \frac{(r+u)!(l+d)!}{(r+l)!(u+d)!}F(r, u,
  l, d),$$ completing the proof.  
\end{proof}

\section{Conjectures and Open Questions}\label{secconj}
In this section, we state conjectures relating peak-count and signed
peak-count, and we discuss directions for future work. Recall that
Albert and Bousquet-M{\'e}lou's recent asymptotic analysis of
permutations sortable by two stacks in parallel relies on two
conjectures, the most interesting of which is Conjecture
\ref{conjunproven}. Our
main conjecture, Conjecture \ref{conjmain}, implies Conjecture
\ref{conjunproven}, and may prove easier for future researchers to tackle.

We call a path \emph{vertical} (resp. \emph{horizontal}) if it
comprises $\uparrow$ and $\downarrow$ (resp. $\rightarrow$ and
$\leftarrow$) steps. A path is \emph{quarter-planar} if it resides in
the quarter plane $\{(x, y)| x \ge 0, y \ge 0\}$. A quarter-planar
vertical or horizontal path is referred to as \emph{positive}.

Fix $\vpath$ to be a vertical positive path with $u$ up-steps and $d$
down-steps. Let $Q$ be the set of quarter-planar paths with $r$
right-steps, $l$ left-steps, and vertical projection $\vpath$. Let
$G_1$ be the generating function for peak-count over $Q$ and $G_2$ be
the generating function for signed peak-count over $Q$. We will state
our conjectures in terms of $G_1$ and $G_2$. Additionally,
computations indicate that all of our conjectures are still true when
we modify $Q$ to consider planar paths rather than quarter-planar
paths and $\vpath$ to be any vertical path rather than necessarily a
positive one\footnote{Note that the modified versions of the
  conjectures neither imply or are implied by the un-modified
  versions. The modified versions may be easier though. For example,
  the modified version of Conjecture \ref{conjalb} is solved in
  \cite{albert15}, and the modified version of Conjecture
  \ref{conjx1equal} is straightforward.}.

We begin by re-stating a conjecture of \cite{albert15}, originally
posed as Conjecture (P2). Conjecture
\ref{conjalb} implies several other conjectures of \cite{albert15},
including Conjecture
\ref{conjunproven}, the primary conjecture on which their asymptotic
analysis of two-stack sortable permutations relies.

\begin{conj}[Albert and Bousquet-M{\'e}lou, Conjecture (P2) on pp. 32 \cite{albert15}] \label{conjalb}
  If $a = b$ and $c =d$, then $G_1$ is $(x+1)$-positive.
\end{conj}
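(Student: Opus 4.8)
The plan is to mimic the building-block strategy that proved Theorem \ref{thmpos}, adapting it to carry the quarter-planar constraint. First I would pin down a clean model for $Q$. Because $\vpath$ is a \emph{positive} vertical loop, every prefix of $\vpath$ has nonnegative height, and since any walk in $Q$ uses its vertical steps in the order dictated by $\vpath$, the vertical coordinate is automatically nonnegative at every prefix. The quarter-planar condition therefore collapses to the horizontal requirement that, reading only the horizontal steps in order, every prefix have at least as many $\rightarrow$'s as $\leftarrow$'s. With $r = l$ this says the horizontal subword is a Dyck (ballot) word of semilength $r$, now free to vary. Thus $Q$ is exactly the set of interleavings of the fixed positive loop $\vpath$ with an arbitrary horizontal Dyck path, and $G_1$ is peak-count over this family.

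With the model in hand, the natural route is to replay the partition used for Theorem \ref{thmpos}: group the objects of $Q$ into classes, each enumerated with respect to peak-count by one of the $(x+1)$-positive building blocks $\bin_k(n)$ and $\bin^k(n)$, whose positivity is supplied by Lemma \ref{lemF} and Proposition \ref{propG}, and then sum. The toggle-equivalence classes of Theorem \ref{thmpos} were generated by swapping the two entries of an odd-indexed pair, and each class was counted by $\bin^{|B|/2}(|B|)$. The hope is that an analogous local move, restricted so as to preserve the Dyck structure of the horizontal subword, again partitions $Q$ into classes counted by $(x+1)$-positive polynomials, perhaps truncations or convolutions of the $\bin^k$.

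The hard part, and the reason Conjecture \ref{conjalb} remains open, is precisely that the Dyck (prefix-positivity) constraint is \emph{global} while the toggle moves are \emph{local}. A toggle that is legal for an unrestricted interleaving can push the horizontal walk below the axis, so each quarter-planar class is a proper subset of the corresponding unrestricted toggle class, and the generating function of a subset of an $(x+1)$-positive family need not itself be $(x+1)$-positive. Making the method work would therefore require a move, or a sign-reversing involution, that pairs up the forbidden (non-quarter-planar) walks in a peak-count-preserving way, most plausibly via a cycle-lemma or reflection argument tailored to the horizontal Dyck factor while leaving its interleaving with $\vpath$ intact. Controlling how peak-count changes under such reflections is, I expect, the central obstacle.

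As an alternative line of attack, I would route through the signed statistic. Theorem \ref{thmpos} already establishes $(x+1)$-positivity of absolute signed peak-count for unrestricted loops, and the spirit of Conjecture \ref{conjmain} is that peak-count positivity should follow from absolute signed peak-count positivity. A natural subgoal is thus the quarter-planar analog of Theorem \ref{thmpos}, namely that absolute signed peak-count over $Q$ is $(x+1)$-positive, proved using the Dyck model above together with the count in Theorem \ref{thmmain}; combined with a quarter-planar version of the peak/signed-peak correspondence this would yield Conjecture \ref{conjalb}. Because signed peak-count telescopes far more cleanly than peak-count (Lemmas \ref{lemL} and \ref{lemN}), the Dyck constraint may be easier to manage on the signed side, but the correspondence step must still confront the same global-versus-local tension, so I expect the quarter-planar constraint to be the genuine crux on either route.
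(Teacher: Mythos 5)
The first thing to say is that this statement is not a theorem of the paper at all: it is Conjecture (P2) of Albert and Bousquet-M{\'e}lou, restated as Conjecture \ref{conjalb}, and it remains open. The paper offers no proof of it; its contribution is a conditional reduction. Theorem \ref{thmpos} establishes $(x+1)$-positivity of absolute signed peak-count for loops, and Conjecture \ref{conjmain} (itself open, only tested computationally for $u,d,r,l \le 5$) asserts that $(x+1)$-positivity of $G_1$ and of $G_2$ are equivalent; together these would imply Conjecture \ref{conjalb}. So your proposal cannot be compared against a proof in the paper, and, to your credit, you recognized this and wrote a research plan rather than claiming a proof. Your second route --- going through the signed statistic --- is in substance exactly the paper's reduction, and the crux you identify (transferring positivity from signed peak-count back to peak-count) is precisely what Conjecture \ref{conjmain} encapsulates.

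One refinement, though: your proposed subgoal, the ``quarter-planar analog of Theorem \ref{thmpos},'' is not actually an obstacle, because the Dyck constraint costs nothing on the signed side. Since $\vpath$ is positive, $Q$ decomposes as the disjoint union, over all positive horizontal words $\hpath$ with $r$ $\rightarrow$'s and $l$ $\leftarrow$'s, of the \emph{full, unrestricted} shuffle class of $\vpath$ and $\hpath$; and by Theorem \ref{thmmain} the signed peak-count distribution over each such class is the same polynomial, independent of which positive $\hpath$ it is. Hence $G_2$ (interpreted, as with $F$, via absolute values) is just a ballot number --- a Catalan number when $r=l$ --- times $F(r,l,u,d)$, and its $(x+1)$-positivity in the case $r=l$, $u=d$ follows immediately from Theorem \ref{thmpos}; no pairing of forbidden walks or reflection argument is needed there. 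The entire difficulty, and the genuine gap in your proposal, is concentrated in the correspondence step you flag at the end: nobody knows how to relate $(x+1)$-positivity of $G_1$ to that of $G_2$, because peak-count, unlike signed peak-count, genuinely depends on the words $\vpath$ and $\hpath$ rather than only on $r,l,u,d$ (this is why Conjecture \ref{conjx1equal} is itself surprising), and no local move analogous to toggling is known to control peak-count. That step is exactly Conjecture \ref{conjmain}, so your plan, like the paper's, terminates in an open problem rather than a proof.
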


Our main conjecture, Conjecture \ref{conjmain}, states a surprising
equivalence between $(x+1)$-positivity of $G_1$ and $G_2$ (tested for
$u,d,r,l \le 5$). In particular, given Conjecture \ref{conjmain}, our
analysis of $(x+1)$-positivity of signed peak-count in Theorem
\ref{thmpos} implies Conjecture \ref{conjalb}.

\begin{conj}\label{conjmain}
  The polynomial $G_1$ is $(x+1)$-positive if and only if $G_2$ is $(x+1)$-positive.
\end{conj}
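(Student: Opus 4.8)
The plan is to realize both $G_1$ and $G_2$ as specializations of one refined generating function and to attack $(x+1)$-positivity at that refined level. Write $p(\sigma)$ and $q(\sigma)$ for the numbers of $\Lsh$ corners and ES corners of a path $\sigma \in Q$, so that peak-count equals $p+q$ and signed peak-count equals $p-q$ (and $G_2$, as in Section~\ref{secpos}, records $|p-q|$). The elementary identity $p+q = |p-q| + 2\min(p,q)$ suggests the refinement
$$R(x,z) = \sum_{\sigma \in Q} x^{\,|p(\sigma)-q(\sigma)|}\, z^{\,\min(p(\sigma),\,q(\sigma))},$$
for which $G_2 = R(x,1)$ and $G_1 = R(x,x^2)$. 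One free observation comes immediately: peak-count and signed peak-count share parity, so $(-1)^{p+q} = (-1)^{|p-q|}$ term by term and hence $G_1(-1) = G_2(-1)$. Since the constant term of an $(x+1)$-expansion is exactly the value at $x=-1$, the two polynomials already agree in the coefficient that Proposition~\ref{propG} and Theorem~\ref{thmpos} identify as the most delicate. The task is to promote this agreement at $x = -1$ to an equivalence of full $(x+1)$-positivity.

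First I would look for a decomposition of $Q$ in the spirit of the toggle-equivalence classes driving the proof of Theorem~\ref{thmpos}, but adapted to the quarter plane and simultaneously recording $\min(p,q)$ and $|p-q|$. The aim would be to express $R(x,z)$ as a nonnegative $\mathbb{Z}[z]$-combination of the building blocks $\bin^k(n)$ of Proposition~\ref{propG}, arranged so that the two specializations $z=1$ and $z=x^2$ are visibly $(x+1)$-positive together (or visibly fail together). A more computational alternative is a kernel-method or transfer-matrix treatment of the full bivariate corner generating function $\Phi(u,v) = \sum_{\sigma \in Q} u^{\,p(\sigma)} v^{\,q(\sigma)}$ over quarter-planar paths with vertical projection $\vpath$, from which $R$, $G_1$ and $G_2$ all follow; one would then read off positivity from the structure of the kernel. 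Either route focuses attention on the joint distribution $M(s,t) = \#\{\sigma \in Q : p+q = s,\ p-q = t\}$, and the real content is to isolate the feature of $M$, special to the quarter plane, that forces its two marginals to be $(x+1)$-positive in lockstep.

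The hard part will be that the machinery of Sections~\ref{secsignedlcount} and~\ref{secpos} does not survive the quarter-planar restriction. The flip $f$ behind Proposition~\ref{propbiject} can carry a path below an axis, so signed peak-count is no longer freely enumerable and the product formula of Theorem~\ref{thmmain}, the engine behind Theorem~\ref{thmpos}, is unavailable over $Q$. Moreover $G_1$ and $G_2$ are genuinely different specializations of $R$: because $x^2 = (x+1)^2 - 2(x+1) + 1$ is not itself $(x+1)$-positive, no formal substitution converts the $(x+1)$-expansion of one statistic into that of the other. The equivalence therefore cannot be purely algebraic; it must encode a genuine combinatorial coincidence in $M(s,t)$. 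I expect the crux to be the construction of a sign-reversing partial involution on $Q$ that matches the building blocks of peak-count against those of signed peak-count, transferring a witnessing $(x+1)$-decomposition between the two statistics — and doing so while respecting the quarter-planar constraint is exactly where the phenomenon is confirmed numerically but a proof is missing.

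As a first, more tractable target I would treat the loop case, where reflection across the diagonal $y = x$ exchanges the two corner types (sending $Q$ to the analogous class with horizontal and vertical roles interchanged) and may restore enough symmetry between $p$ and $q$ to pin down $M(s,t)$ explicitly. A successful analysis there, combined with an induction on $r+u$ in the spirit of Case~3 of Theorem~\ref{thmmain}, would be the natural route to the general statement and, through Theorem~\ref{thmpos}, to Conjecture~\ref{conjalb} and hence Conjecture~\ref{conjunproven}.
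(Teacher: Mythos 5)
This statement is Conjecture~\ref{conjmain} of the paper: it is posed as an open problem, verified only computationally for $u,d,r,l \le 5$, and the paper contains no proof of it. Your proposal does not contain one either. Everything after the first paragraph is conditional --- ``I would look for a decomposition,'' ``the aim would be,'' ``I expect the crux to be'' --- and the decomposition, the kernel analysis, and the sign-reversing involution that would constitute the actual argument are never constructed. What you do establish is correct but already known to the paper: the refinement $R(x,z)$ with $G_2 = R(x,1)$ and $G_1 = R(x,x^2)$, and the parity consequence $G_1(-1) = G_2(-1)$, only show that the constant coefficients of the two $(x+1)$-expansions agree. Section~\ref{secconj} records exactly this fact and explicitly flags it as insufficient: ``$(x+1)$-positivity of $G_1$ and $G_2$ does not depend only on this coefficient.'' So the gap is not a single missing step; it is the entire argument, and a completed version of your plan would constitute a new theorem rather than a recovery of anything in the paper.

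One technical claim in your obstacle analysis is also wrong, and it matters for the program you outline. You assert that the flip $f$ of Proposition~\ref{propbiject} ``can carry a path below an axis.'' It cannot: $f$ only permutes the steps inside out-runs, and an out-run consists solely of $\rightarrow$ and $\uparrow$ steps, so every intermediate point of a reordered run dominates, coordinate-wise, the run's starting point, which already lies in the quarter plane; hence $f$ restricts to an involution on quarter-planar shuffles. The genuine obstruction to running the paper's machinery over $Q$ lies elsewhere. First, Proposition~\ref{propNcount}'s blue-red-coloring enumeration of In-Vert does not respect quarter-planarity: the shuffle reconstructed from a coloring need not stay in the quarter plane, so the product formula of Theorem~\ref{thmmain} gives no count over $Q$. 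Second, $Q$ is not a single shuffle class but a union, over all horizontal projections with $r$ right-steps and $l$ left-steps, of shuffle classes intersected with the quarter-planar condition. Any toggle- or involution-based attack on Conjecture~\ref{conjmain} will have to work around these two failures, not the (nonexistent) failure of $f$ to preserve the quarter plane.
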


Since peak-count and signed peak-count are of equal parity, the constant
coefficients of $G_1$ and $G_2$ as polynomials in $(x+1)$ are
equal. Computations show, however, that $(x+1)$-positivity of $G_1$
and $G_2$ does not depend only on this coefficient.

By Theorem \ref{thmmain}, $G_2$ depends only on $r$, $l$, $u$, and
$d$. Thus Conjecture \ref{conjmain} implies the following.

\begin{conj}\label{conjx1equal}
  The $(x+1)$-positivity of $G_1$ depends only on $r$, $l$, $u$, and $d$.
\end{conj}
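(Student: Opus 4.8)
The plan is to derive Conjecture \ref{conjx1equal} from Conjecture \ref{conjmain} together with a single lemma: that the distribution of signed peak-count over $Q$, and hence $G_2$, depends only on $r$, $l$, $u$, and $d$, not on the particular positive vertical path $\vpath$. Granting this lemma, the deduction is immediate. Whether a polynomial is $(x+1)$-positive is a property of the polynomial itself, so if $G_2$ is determined by $r,l,u,d$ then so is the truth value of the assertion ``$G_2$ is $(x+1)$-positive.'' Conjecture \ref{conjmain} states that $G_1$ is $(x+1)$-positive exactly when $G_2$ is; hence the truth value of ``$G_1$ is $(x+1)$-positive'' also depends only on $r,l,u,d$, which is precisely Conjecture \ref{conjx1equal}.

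The substance therefore lies entirely in the lemma, which I would prove by decoupling the quarter-plane condition along the two projections. A lattice path is quarter-planar exactly when every prefix has $x \ge 0$ and $y \ge 0$. Since the $x$-coordinate of a prefix is a prefix sum of the horizontal projection and the $y$-coordinate is a prefix sum of $\vpath$, quarter-planarity is equivalent to requiring the horizontal projection to be a positive path and $\vpath$ to be a positive path, and these two requirements are independent of how the projections are interleaved. Because $\vpath$ is assumed positive, the condition $y \ge 0$ holds automatically, so $Q$ is exactly the set of interleavings of $\vpath$ with positive horizontal paths using $r$ right-steps and $l$ left-steps.

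I would then partition $Q$ according to its horizontal projection $\hpath$, which ranges over the positive horizontal words with $r$ right-steps and $l$ left-steps. For each fixed such $\hpath$, the paths in the corresponding block are precisely the shuffles of $\vpath$ and $\hpath$, and by Theorem \ref{thmmain} the signed-peak-count distribution over these shuffles is ${r+u \choose u-k}{l+d \choose d+k}$, which is independent of both $\vpath$ and the chosen word $\hpath$. Summing over the blocks, $G_2$ is this common distribution scaled by the number $N(r,l)$ of positive horizontal words with $r$ right-steps and $l$ left-steps. As $N(r,l)$ is a ballot number depending only on $r$ and $l$, the lemma follows.

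The one point that requires care---and the conceptual crux of the argument---is the appeal to Theorem \ref{thmmain} to assert that the per-block distribution is genuinely independent of the specific positive word $\hpath$. This is exactly what lets the passage from all horizontal words (the unrestricted shuffle setting of Theorem \ref{thmmain}) to the positive horizontal words cut out by the quarter-plane condition cost nothing beyond the multiplicative factor $N(r,l)$. No reflection or cycle-lemma argument is needed, precisely because Theorem \ref{thmmain} has already removed all dependence on the interleaving and on the individual letters of $\hpath$; the only genuinely unproven input that remains is Conjecture \ref{conjmain} itself.
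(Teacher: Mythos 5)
Your proposal is correct and takes essentially the same route as the paper, which derives Conjecture \ref{conjx1equal} from Conjecture \ref{conjmain} via the observation that, by Theorem \ref{thmmain}, $G_2$ depends only on $r$, $l$, $u$, and $d$. Your decomposition of $Q$ into shuffle classes indexed by positive horizontal words (with the ballot-number count $N(r,l)$ as the only extra factor) is exactly the justification the paper leaves implicit in that one-sentence observation.
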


This conjecture is particularly surprising given that $G_1$ itself
does not depend only on $r$, $l$, $u$, and $d$. Can one characterize for which
$r, l, u, d$ $G_1$ and $G_2$ are $(x+1)$-positive? Proposition
\ref{propscale} shows that $(x+1)$-positivity of $G_2$ is unchanged by
swapping $d$ and $r$ or $u$ and $l$.

In the proof of Theorem \ref{thmpos}, we essentially partitioned
shuffles of vertical and horizontal loops $\vpath$ and $\hpath$ so
that each part has a signed peak-count distribution counted by the
generating function $\bin^{j}(2j)$ for some $j \in \NN$. It is natural
to ask whether this proof technique extends to analyzing $G_2$ for
arbitrary $r, l, u, d$. Observe that $B = \{1\} \cup
\{\frac{\bin^{j}(2j)}{2} : j \in \mathbb{N}\} = \{1\} \cup
\{\bin^{j}(2j-1) | j \in \mathbb{N}\}$ contains a single monic
polynomial of each degree\footnote{Note that $2\bin^j(2j-1) =
  \bin^j(2j)$ for $j \in \mathbb{N}$.}, implying that any polynomial
in $\mathbb{Z}[x]$ can be expressed uniquely as a linear combination
of elements in $B$. If a polynomial can be expressed as linear
combination of elements of $B$ with non-negative
coefficients\footnote{We do not require the coefficient of each
  $\bin^j(2j-1)$ to be even, as is the case when $a = b$ and $c
  =d$. In fact, such a requirement would make Conjecture
  \ref{conjbuild} untrue.}  call it \emph{toggle-buildable}. The
following conjecture has been tested for $r, l, u, d \le 5$.

\begin{conj}\label{conjbuild}
 The generating function $G_2$ is toggle-buildable exactly when $G_2$ is $(x+1)$-positive.
\end{conj}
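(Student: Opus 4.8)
\section*{Proof proposal for Conjecture \ref{conjbuild}}

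The plan is to treat $(x+1)$-positivity and toggle-buildability as two non-negativity conditions on the \emph{same} polynomial $G_2$ read off in two different bases, and to show they coincide by exploiting the rigid shape of $G_2$ given by Theorem \ref{thmmain}. Write $G_2 = \sum_{j \ge 0} \beta_j b_j$ in the basis $B = \{b_0, b_1, \dots\}$ with $b_0 = 1$ and $b_j = \bin^j(2j-1)$ (monic of degree $j$), and write $G_2 = \sum_{i \ge 0} \gamma_i (x+1)^i$ in the $(x+1)$-power basis. Then toggle-buildability is the assertion that $\beta_j \ge 0$ for all $j$, while $(x+1)$-positivity is the assertion that $\gamma_i \ge 0$ for all $i$. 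The entire content of the conjecture is that, for polynomials of the special form $G_2$, these two sign conditions are equivalent.

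First I would dispose of the direction toggle-buildable $\Rightarrow$ $(x+1)$-positive, which holds for \emph{every} polynomial and not just $G_2$. By Proposition \ref{propG} together with the identity $2\bin^j(2j-1) = \bin^j(2j)$, one gets $b_j = \sum_{i>0} \binom{2j-i-1}{j-1}(x+1)^i$, so each element of $B$ is itself $(x+1)$-positive with non-negative integer coefficients; hence any non-negative combination of them is $(x+1)$-positive. Recording this gives the transition relations $\gamma_0 = \beta_0$ and $\gamma_i = \sum_{j \ge i} \binom{2j-i-1}{j-1}\beta_j$ for $i \ge 1$. The matrix $M$ implementing $\gamma = M\beta$ is upper-triangular with non-negative entries, which is exactly why $\beta \ge 0$ forces $\gamma \ge 0$.

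The substance is the converse, $(x+1)$-positive $\Rightarrow$ toggle-buildable. Since $M^{-1}$ has entries of mixed sign, the implication $\gamma \ge 0 \Rightarrow \beta \ge 0$ is simply false for general polynomials, so the argument must use the specific form of $G_2$. Using Theorem \ref{thmmain} to write the monomial coefficients of $G_2$ as $\binom{r+u}{u+i}\binom{l+d}{d-i}$ (folded by absolute value), I would extract closed forms for the $\gamma_i$ (via the Taylor expansion of $G_2$ at $x=-1$) and for the $\beta_j$ (by peeling leading terms against the monic basis $B$, or equivalently by generalizing the toggle-equivalence partition of Theorem \ref{thmpos} to arbitrary $r,l,u,d$). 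The target would be to prove that the sequences $(\gamma_i)$ and $(\beta_j)$ each change sign at most once, and at compatible locations, so that ``all $\gamma_i \ge 0$'' and ``all $\beta_j \ge 0$'' collapse to the same single inequality in $r,l,u,d$. Proposition \ref{propscale} would cut down the casework, since $(x+1)$-positivity of $G_2$ is symmetric in the pairs $(l,u)$ and $(d,r)$.

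The main obstacle is precisely this converse. In the balanced case of Theorem \ref{thmpos} the toggle partition succeeded because the even and odd positions pair two at a time, forcing every base $B(w)$ to have even size and exhibiting $G_2$ as a manifestly non-negative combination of the polynomials $\bin^j(2j)$. For general $r,l,u,d$ the two blocks (of lengths $r+u$ and $l+d$) no longer pair evenly, so the natural partition produces the finer building blocks $\bin^j(2j-1)$ with coefficients that need not be non-negative; it is exactly these configurations that can destroy toggle-buildability, and the heart of the conjecture is that they destroy $(x+1)$-positivity at the very same parameters. Converting this qualitative picture into a sign-change estimate sharp enough to make the two positivity regions coincide is where a genuinely new idea appears to be needed, which is consistent with the statement having so far only been verified computationally for $r,l,u,d \le 5$.
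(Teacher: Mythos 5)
First, a point of calibration: the statement you are proving is Conjecture \ref{conjbuild}, which the paper leaves \emph{open} --- the author reports only computational verification for $r,l,u,d \le 5$ and offers no proof. So there is no proof in the paper to compare yours against; a complete argument here would be new mathematics, and your proposal does not supply one.

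What you do establish is the easy direction. Your observation that $b_0 = 1$ and that each $b_j = \bin^j(2j-1) = \sum_{i>0}\binom{2j-i-1}{j-1}(x+1)^i$ (via Proposition \ref{propG} and the paper's identity $2\bin^j(2j-1) = \bin^j(2j)$) is $(x+1)$-positive is correct, and it does give that any non-negative combination of elements of $B$ is $(x+1)$-positive; integrality of the resulting $(x+1)$-coefficients is automatic since $G_2 \in \mathbb{Z}[x]$. The genuine gap is the converse, which is the entire content of the conjecture, and for it your proposal contains a target rather than an argument: you state that one should show the coefficient sequences $(\gamma_i)$ and $(\beta_j)$ each change sign at most once and ``at compatible locations,'' but you prove no such sign-change property, give no closed form for the $\beta_j$ in the unbalanced case (the toggle-class partition of Theorem \ref{thmpos} relies essentially on $r=l$, $u=d$, and you do not exhibit its replacement), and you concede yourself that a new idea is needed. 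Note also that any such argument must use the exact binomial structure of $G_2$ from Theorem \ref{thmmain} and not just properties of the two bases: the paper's own example with $r=l=4$, $u=d=2$, where $G_1 = x^2+12x+15 = (1+x)^2+10(1+x)+4$ is $(x+1)$-positive but not toggle-buildable, shows the equivalence fails for closely related polynomials. In short: your framing is sound, the forward implication is correctly disposed of, but the substantive direction remains exactly as open as it is in the paper.
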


However, the same cannot be said for peak-count (i.e., $G_1$), even
when $r = l$ and $u = d$ (so that we are considering quarter-planar
loops with a fixed vertical projection). For example, when
$r=l=4$ and $u=d=2$, the polynomial $G_1$, which equals $x^2 + 12x + 15 = (1 +
x)^2 + 10(1 + x) + 4$, is $(x+1)$-positive but not toggle-buildable.

Our proof of Theorem \ref{thmpos} suggests several additional
questions. In the proof of Theorem \ref{thmpos}, we discuss $F(m, m,
n, n)$ in terms of the absolute even-count of binary words. Assuming
the final step of $\vpath$ is $\downarrow$, however, then the proof
of Proposition \ref{propNcount} provides a straightforward bijection
between binary words in $W^m_n$ with absolute even-count $k$ and
shuffles of $\vpath$ and $\hpath$ with shifted In-Vert
$k$. Furthermore, if we apply Proposition \ref{propbiject}, then we
obtain a bijection between binary words in $W^m_n$ with absolute even
count $k$ and shuffles of $\vpath$ and $\hpath$ with signed peak-count
$k$. This leads us to pose several open problems:
\begin{enumerate}
\item When $i$-toggling a binary word changes its even-count by one,
  it also changes the signed peak-count of the corresponding shuffle by
  one. Computer computations indicate that, in fact, when $i$-toggling
  changes signed peak-count by one, it often also only changes peak-count by
  one. If one could characterize when this is the case, then toggling
  could be a useful tool for proving Conjectures \ref{conjalb} and
  \ref{conjmain}.
\item Conjecture \ref{conjbuild} suggests that the the toggling
  argument in the proof of Theorem \ref{thmpos} may have the potential to be
  extended to $F(m, n, m, n)$ (without using the trick from the proof
  of Proposition in which we multiply the expression by a non-integer
  amount before analyzing it). In particular, such an extension might
  yield a combinatorial interpretation for the super Catalan
  number. Indeed, the shuffles in toggle-equivalence classes of size
  one would be exactly those counted by the constant term of $F(m, n,
  m, n)$ expanded in powers of $(x + 1)$, of which there are a super
  Catalan number by Corollary \ref{cormod2}.
\item Suppose $\hpath$ is of the
  form $$\rightarrow, \rightarrow, \rightarrow, \ldots, \rightarrow,
  \leftarrow, \ldots, \leftarrow, \leftarrow, \leftarrow,$$ and $\vpath$ is of the
  form  $$\uparrow,\uparrow,\uparrow,\ldots, \uparrow, \downarrow,
  \ldots, \downarrow, \downarrow, \downarrow,$$ each with
  the same number of outward steps as inward steps. Then any shuffle
  of $\vpath$ and $\hpath$ cannot contain both $(\uparrow,
  \leftarrow)$ and $(\rightarrow, \downarrow)$ pairs. Thus the
  absolute value of the signed peak-count is equal to the peak-count
  of such shuffles. Consequently, Theorem \ref{thmpos} proves the
  $(x+1)$-positivity of the generating function counting these
  shuffles by their peak-count.

  However, we have been unable to find a simple combinatorial
  description for the action of $i$-toggling on these shuffles (i.e.,
  the result of $i$-toggling the corresponding binary word). Does one
  exist? And is there an analogue of toggling for peak-count rather than
  signed peak-count?
\end{enumerate}

\section{Acknowledgments}

This research was conducted at the University of Minnesota Duluth REU
and was supported by NSF grant 1358695 and NSA grant
H98230-13-1-0273. The author thanks Joe Gallian for suggesting the
problem, as well as Timothy Chow, Aaron Pixton, and Adam Hesterberg
for offering advice on exposition and directions of research.

\newpage

\bibliography{writeup}

\end{document}